\numberwithin{equation}{section}
\theoremstyle{plain}
\newtheorem{thm}{Theorem}[section]
\newtheorem{lemma}[thm]{Lemma}
\newtheorem{prop}[thm]{Proposition}
\theoremstyle{definition}
\newtheorem{conj}{Conjecture}
\newtheorem{exmp}[thm]{Example}
\newtheorem{defi}{Definition}
\theoremstyle{remark}
\newtheorem{rmk}[thm]{Remark}
\newcommand{\Hb}{\mathbb{H}}
\newcommand{\SL}{{\mathrm{SL}}}
\newcommand{\Zb}{\mathbb{Z}}
\newcommand{\Cb}{\mathbb{C}}
\newcommand{\Qb}{\mathbb{Q}}
\newcommand{\Nb}{\mathbb{N}}
\newcommand{\lp}{\left (}
\newcommand{\rp}{\right )}
\newcommand{\pf}{\mathfrak{p}}
\newcommand{\Rb}{\mathbb{R}}
\newcommand{\smat}[4]{\left(\begin{smallmatrix}
                 #1 & #2\\
                 #3 & #4
\end{smallmatrix}\right)}
\newcommand{\GL}{{\mathrm{GL}}}
\newcommand{\Ac}{{\mathcal{A}}}
\newcommand{\ebf}{{\mathbf{e}}}
\newcommand{\SO}{{\mathrm{SO}}}
\newcommand{\Aut}{{\mathrm{Aut}}}
\newcommand{\Oc}{\mathcal{O}}
\newcommand{\df}{\mathfrak{d}}
\newcommand{\Cl}{{\mathrm{Cl}}}
\newcommand{\af}{\mathfrak{a}}
\newcommand{\Nm}{{\mathrm{Nm}}}
\newcommand{\ord}{{\mathrm{ord}}}
\renewcommand{\Mc}{{\mathcal{M}}}
\newcommand{\Uc}{{\mathcal{U}}}
\newcommand{\tr}{\mathrm{tr}}
\newcommand{\Ab}{\mathbb{A}}
\newcommand{\Pb}{\mathbb{P}}
\newcommand{\Qc}{\mathcal{Q}}
\newcommand{\OcF}{{\mathcal{O}_F}}
\newcommand{\dfF}{\mathfrak{d}_F}
\newcommand{\ga}{\gamma}
\newcommand{\ee}{A}
   \def\MR#1{}
\begin{document}
\title{ Span of Restriction of Hilbert Theta Functions}
  \author[Gabriele Bogo, Yingkun Li]{Gabriele Bogo, Yingkun Li}
\address{Fachbereich Mathematik,
Technische Universit\"at Darmstadt, Schlossgartenstrasse 7, D--64289
Darmstadt, Germany}
\email{bogo@mathematik.tu-darmstadt.de}
\email{li@mathematik.tu-darmstadt.de}
\thanks{}

\date{\today}
\maketitle

\begin{abstract}
  In this paper, we study the diagonal restrictions of certain Hilbert theta series for a totally real field $F$, and prove that they span the corresponding space of elliptic modular forms when the $F$ is quadratic or cubic.
  Furthermore, we give evidence of this phenomenon when $F$ is quartic, quintic and sextic. 
\end{abstract}

\section{Introduction}

Theta functions are classical examples of holomorphic modular forms.
Given a positive definite, unimodular $\Zb$-lattice $L$ of rank $8m$ with $m \in \Nb$, the associated theta function
\begin{equation}
  \label{eq:thetaL}
  \theta_L(\tau):= \sum_{\lambda \in L} q^{Q(\lambda)},~ q := \ebf(\tau) := e^{2\pi i \tau},
\end{equation}
is in $M_{4m}$, the space of elliptic modular forms of weight $4m$ on $\SL_2(\Zb)$. 
For example, the theta functions associated to the $E_8$ lattice and Leech lattice $\Lambda_{24}$ are explicitly given as
\begin{equation}
  \label{eq:E8}
  \theta_{E_8}(\tau) = E_4(\tau),~
  \theta_{\Lambda_{24}}(\tau) = E_4(\tau)^3 - 720 \Delta(\tau),
\end{equation}
where $E_{2k}(\tau)$ is the Eisenstein series of weight $2k$ and $\Delta(\tau)$ is the Ramanujan $\Delta$-function.

For $N \in \Nb$, we denote 
\begin{equation}
  \label{eq:McQ}
\Mc^{(N)}_\Qb := \bigoplus_{k \in \Nb}M_{Nk}
\end{equation}
the  finitely generated graded algebra of elliptic modular forms with weights divisible by $N$, and would like to consider the subalgebra~$    \Mc^\theta_\Qb \subset \Mc^{(4)}_\Qb$  generated by theta functions of unimodular lattices.
Using the relation
\begin{equation}
  \label{eq:thetaprod}
  \theta_{L_1 \oplus L_2} (\tau) =   \theta_{L_1}(\tau) \theta_{L_2} (\tau).
\end{equation}
for any two unimodular lattices $L_1, L_2$, we see that $\Mc^\theta_\Qb$ is simply the span of such theta functions.
Equation \eqref{eq:E8} and the fact $\Mc^{(4)}_\Qb = \Cb[E_4, \Delta]$ imply that 
\begin{equation}
  \label{eq:d0}
\Mc^\theta_\Qb = \Mc^{(4)}_\Qb.
\end{equation}

The construction of theta functions also extends to the case of Hilbert modular forms.
Let $F$ be  a totally real field of degree $d$ with ring of integers $\OcF$, and denote $\alpha_j \in \Rb$ the real embeddings of $\alpha \in F$ for $1 \le j \le d$.
For $N \in \Nb$, denote $\Mc_F^{(N)}$ the algebra of holomorphic Hilbert modular forms of parallel weight $Nk$ for $k \in \Nb$. 
Given a totally positive definite, $\Zb$-unimodular $\OcF$-lattice $L$ of rank $8m$ (see Definition \ref{def:Zuni}), the associated theta function
\begin{equation}
  \label{eq:thetaL1}
  \theta_L(\tau):= \sum_{\lambda \in L} \prod_{j = 1}^d q_j^{Q(\lambda)_j},~ \tau = (\tau_1, \dots, \tau_d) \in \Hb^d,~ q_j := \ebf(\tau_j), 
\end{equation}
is a Hilbert modular form of parallel weight $4m$ on $\SL_2(\OcF)$. 
It is well-known that such lattice exists precisely when
\begin{equation}
  \label{eq:mcond}
 m \in \frac{1}{d_2} \Nb,~  d_2 := \gcd(2, d)
\end{equation}
 (see Prop.\ \ref{prop:exist}). 
However, their explicit constructions and classification have only been carried out when $d$ is small (see e.g.\ \cite{Scharlau94, Wang14}). 
As a result, the relationship between $\Mc_F^{(4/d_2)}$ and the subalgebra $\Mc^\theta_F$ generated by such $\theta_L$ is not clear.

On the other hand, we have the following diagonal restriction map
\begin{align*}
  \Mc^{(N)}_F &\to   \Mc^{(Nd)}_\Qb \\
  f &\mapsto f^\Delta(\tau) := f(\tau^\Delta),
\end{align*}
where $\tau^\Delta = (\tau, \dots, \tau) \in \Hb^d$.
In this note, we will investigate the question about the image of $\Mc^\theta_F$ under this map, which is denoted by $(\Mc^\theta_F)^\Delta$ and contained in $\Mc^{(4d/d_2)}_\Qb$.
The main result is as follows.

\begin{thm}
  \label{thm:main}
  For a totally real field $F$ of degree $d = 2 , 3$, we have
  \begin{equation}
    \label{eq:main}
(    \Mc_F^\theta)^\Delta = \Mc^{(4d/d_2)}_\Qb.
\end{equation}
\end{thm}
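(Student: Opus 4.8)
The plan is to combine the finitely generated graded‑algebra structure of the target with the identification of a diagonal restriction as a classical theta series, and then to construct enough unimodular $\OcF$-lattices. First I would reduce the statement to low weight. Since $\Mc_\Qb = \Cb[E_4,E_6]$ and $\Delta = \tfrac{1}{1728}(E_4^3-E_6^2)$, for $d=2$ one has $d_2=2$, $4d/d_2=4$, and $\Mc^{(4)}_\Qb = \Cb[E_4,\Delta]$ is freely generated in weights $4$ and $12$; for $d=3$ one has $d_2=1$, $4d/d_2=12$, and $\Mc^{(12)}_\Qb$ is generated as a $\Cb$-algebra by its weight-$12$ component $M_{12}=\Cb E_4^3\oplus\Cb\Delta$, because the monomials $E_4^{3a}E_6^{2b}$ with $a+b=k$ form a basis of $M_{12k}$. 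The image $(\Mc^\theta_F)^\Delta$ is a graded subalgebra of $\Mc^{(4d/d_2)}_\Qb$, and when $d=2$ it already contains $E_4$: by Proposition~\ref{prop:exist} there is a rank-$4$ totally positive definite $\Zb$-unimodular $\OcF$-lattice $L$, and $\theta_L^\Delta$, of weight $4$ with constant term $1$, is a nonzero element of $M_4=\Cb E_4$. So in both cases one is reduced to producing weight-$12$ elements of the image spanning $M_{12}$: for $d=2$ a single restriction outside $\Cb E_4^3$ suffices (together with $E_4^3=(\theta_L^3)^\Delta$ it yields $\Delta$), and for $d=3$ two restrictions with distinct $\Delta$-coefficients suffice (both have constant term $1$, so they span $M_{12}$).

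Next I would make these weight-$12$ restrictions explicit. Setting $\tau_1=\cdots=\tau_d=\tau$ in \eqref{eq:thetaL1} gives $\theta_L^\Delta(\tau)=\sum_{\lambda\in L}q^{\tr_{F/\Qb}(Q(\lambda))}=\theta_{\check L}(\tau)$, where $\check L$ is the $\Zb$-lattice $L$ equipped with the positive definite even unimodular quadratic form $\tr_{F/\Qb}\circ Q$ of rank $8md$. In the relevant cases ($d=2$, $m=\tfrac32$ and $d=3$, $m=1$) this rank is $24$, so $\check L$ is one of the $24$ Niemeier lattices and $\theta_{\check L}=E_4^3+(r_2(\check L)-720)\Delta$, where $r_2(\check L)$ is the number of roots. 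Hence $\theta_L^\Delta\notin\Cb E_4^3$ exactly when $r_2(\check L)\ne 720$, i.e.\ when $\check L$ is neither $E_8^{\perp 3}$ nor $E_8\perp D_{16}^+$.

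It remains — and this is the heart of the matter — to show that every totally real field of degree $2$ admits a rank-$12$, and every totally real field of degree $3$ admits two rank-$8$, totally positive definite $\Zb$-unimodular $\OcF$-lattices whose Niemeier lattices $\check L$ avoid $\{E_8^{\perp 3},\,E_8\perp D_{16}^+\}$, in the degree-$3$ case with two distinct root numbers. For many $F$ — for instance when $\OcF$ lies in the maximal real subfield of a cyclotomic field, such as $\Qb(\sqrt2),\Qb(\sqrt3),\Qb(\sqrt5)$ — one can write down an $\OcF$-multiplication on the Leech lattice directly, producing the restriction $E_4^3-720\Delta$. In general I would argue via a mass formula: the $\OcF$-mass of the genus of rank-$12$ (resp.\ rank-$8$) $\Zb$-unimodular $\OcF$-lattices grows with $|\mathrm{disc}(F)|$, while $|\Aut_{\OcF}(L)|\le|\Aut_\Zb(\check L)|$ is bounded by a fixed constant for $\check L\in\{E_8^{\perp 3},E_8\perp D_{16}^+\}$; hence for $|\mathrm{disc}(F)|$ large the genus contains classes beyond these exceptions, and (for $d=3$) classes with two different root numbers. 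The finitely many remaining fields are then dispatched using the explicit enumerations of unimodular lattices over small totally real fields in \cite{Scharlau94,Wang14}, or by direct computation. I expect this last step to be the main obstacle: one needs an effective lower bound on the $\OcF$-mass and a complete treatment of the small-discriminant cases. It is precisely the constraint $d\le 3$ — which keeps the relevant $\Zb$-ranks, and the dimensions of the spaces $M_{4md}$, under control — that makes this feasible, consistent with only partial evidence being available for $d=4,5,6$.
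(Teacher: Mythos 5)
Your reduction to weight $12$ and the identification $\theta_L^\Delta=\theta_{\check L}$ with $\check L$ a Niemeier lattice are correct and match the skeleton of the paper's argument (the paper likewise uses $\theta_P=E_{12}+(N_2(P)-65520/691)\Delta$ in the cubic case). The gap is in the step you yourself flag as the heart of the matter: the mass-formula argument does not deliver the required lattices. A lower bound on $\sum_{[L]}1/|\Aut_{\OcF}(L)|$ together with the bound $|\Aut_{\OcF}(L)|\le|\Aut_\Zb(\check L)|$ for $\check L\in\{E_8^{\perp 3},E_8\perp D_{16}^+\}$ only shows that the \emph{number of classes} in the genus is large; it is perfectly consistent with every class restricting to one of the two exceptional Niemeier lattices, unless you additionally bound the number of isomorphism classes of $\OcF$-structures on each fixed $P$ compatibly with the form. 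That count is the number of self-adjoint embeddings $\OcF\hookrightarrow\mathrm{End}(P)$ up to $\Aut_\Zb(P)$, and the naive bound on it (self-adjoint endomorphisms with prescribed characteristic polynomial, hence operator norm $\max_j|\alpha_j|$) grows with $D_F$, so nothing forces a non-exceptional class to appear. The fallback "dispatch the finitely many remaining fields via \cite{Scharlau94,Wang14}" presupposes the asymptotic step works, and those enumerations exist only for a handful of small discriminants, whereas there are infinitely many quadratic and cubic $F$. The same problem afflicts the cubic case, where you need two classes with distinct root numbers: a priori all classes could share one root number.

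The paper closes exactly this gap by averaging over the genus instead of exhibiting individual lattices: the Siegel--Weil formula (Prop.~\ref{prop:Funilatt}) shows the genus-average of the $\theta_{h\cdot L}$ is the Hecke Eisenstein series $E_{F,k}$, so $E_{F,k}\in\Mc^\theta_F$, and its diagonal restriction is then shown to be linearly independent of $E_4^3$ (resp.\ of $\theta_P$) by bounding $\langle E_{F,k}^\Delta,f\rangle$ for cusp forms $f$ --- via Kohnen--Zagier for $d=2$, and via the double-coset/binary-cubic-form computation (Prop.~\ref{prop:Pet}, Lemma~\ref{lemma:estimate}) plus a finite numerical check for $D_F<70000$ when $d=3$. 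Morally this is the statement you need --- the genus-average of $N_2(\check L)$ is near $65520/691\approx 94.8$, far from $720$, so a non-exceptional class must exist --- but proving that average is small \emph{is} the Eisenstein-series computation; it cannot be replaced by a mass bound alone. If you want to salvage your route, you would have to either control the number of $\OcF$-structures on $E_8^{\perp3}$ and $E_8\perp D_{16}^+$ uniformly in $D_F$, or reintroduce the Siegel--Weil average, at which point you have rederived the paper's proof.
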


Based on this, it is then natural to make the following conjecture.
\begin{conj}
  \label{conj:d}
Equation \eqref{eq:main} holds for any totally real field $F$ of degree $d$.
  \end{conj}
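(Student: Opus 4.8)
The plan is to convert the statement into one about ordinary even unimodular $\Zb$-lattices. For a $\Zb$-unimodular $\OcF$-lattice $L$ of $\OcF$-rank $r$ one has $\sum_{j=1}^d Q(\lambda)_j = \mathrm{tr}_{F/\Qb}(Q(\lambda))$, so the diagonal restriction is $\theta_L^\Delta(\tau)=\sum_{\lambda\in L} q^{\mathrm{tr}_{F/\Qb} Q(\lambda)}=\theta_{L^\Zb}(\tau)$, the classical theta series of the underlying even unimodular $\Zb$-lattice $L^\Zb$ of rank $dr$. Thus $(\Mc_F^\theta)^\Delta$ is the span of the $\theta_{L^\Zb}$, where $L^\Zb$ ranges over those even unimodular $\Zb$-lattices carrying an $\OcF$-multiplication, i.e.\ a self-adjoint $\OcF$-action making $L^\Zb$ a totally positive definite $\OcF$-lattice with $\mathfrak{d}_F^{-1}$-valued unimodular form. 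This span is a graded subalgebra of $\Mc^{(4d/d_2)}_\Qb$, so by \eqref{eq:thetaprod} it suffices to hit a generating set. Recalling $\Mc^{(4)}_\Qb=\Cb[E_4,\Delta]$ and (by a standard computation inside $\Cb[E_4,E_6]$) $\Mc^{(12)}_\Qb=\Cb[E_4^3,\Delta]$ with $M_{12}=\Cb E_4^3\oplus\Cb\Delta$, the task becomes: for $d=2$, produce $E_4$ together with one weight-$12$ form not proportional to $E_4^3$; for $d=3$, produce two linearly independent weight-$12$ forms.

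When $d=2$, Proposition \ref{prop:exist} furnishes an $\OcF$-lattice $L_4$ of $\OcF$-rank $4$, whose underlying $\Zb$-lattice is the unique even unimodular one of rank $8$, namely $E_8$; hence $E_4=\theta_{E_8}=\theta_{L_4}^\Delta$ lies in the image, as does $E_4^3=\theta_{L_4^{\oplus 3}}^\Delta$. In both cases the remaining weight-$12$ forms come from $\OcF$-lattices of $\OcF$-rank $12$ (for $d=2$) or $8$ (for $d=3$), which in either case have $\Zb$-rank $24$, so their underlying lattices are Niemeier lattices $N$. Since $\theta_N=E_4^3+(R(N)-720)\,\Delta$, where $R(N)$ is the number of roots of $N$, and the only Niemeier lattices with $R(N)=720$ are $E_8^3$ and $E_8\oplus D_{16}^+$, the whole problem reduces to: for every totally real $F$ of degree $2$ or $3$, the genus $\mathcal G$ of $\Zb$-unimodular $\OcF$-lattices of $\Zb$-rank $24$ contains a lattice with $R(N)\neq 720$ — and for $d=3$, two lattices with distinct values of $R(N)$.

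To verify this I would combine the Siegel--Weil formula with a mass estimate. The $\Aut$-weighted average of $\theta_L$ over $\mathcal G$ is a Hilbert Eisenstein series of parallel weight $r/2$, whose diagonal restriction $\Ec_F=E_4^3+c_1(F)\,\Delta$ therefore lies in the image, with $c_1(F)$ the weighted average of the integers $R(N)-720$. If every lattice in $\mathcal G$ had $R(N)=720$, then $\mathcal G$ would be supported on the two fixed Niemeier lattices $E_8^3,\ E_8\oplus D_{16}^+$, so the Siegel mass of $\mathcal G$ would be bounded above by the number of $\OcF$-multiplications these two $24$-dimensional lattices carry. But a self-adjoint $\omega\in\mathrm{End}(N)$ with $\Zb[\omega]\cong\OcF$ has matrix entries of size $\mathrm{disc}(F)^{O(1)}$ (its eigenvalues being real embeddings of an $\OcF$-generator of controlled height), so only $\mathrm{disc}(F)^{O(1)}$ such $\omega$ exist, whereas by the functional equation the mass of $\mathcal G$ equals a product of values $|\zeta_F(1-2j)|$ times fixed local densities and grows like a power of $\mathrm{disc}(F)$ with strictly larger exponent. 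For $\mathrm{disc}(F)$ large this is a contradiction and produces a lattice with $R(N)\neq 720$; the finitely many remaining fields are dispatched using the explicit classifications of small-rank $\OcF$-lattices in \cite{Scharlau94, Wang14}. The same comparison (or a single Kneser-neighbor step) shows $\mathcal G$ cannot be supported on one Niemeier lattice, so two values of $R(N)$ occur, and together with $\Ec_F$ these span $M_{12}$. Combined with the reduction of the first paragraph and \eqref{eq:thetaprod}, this yields $(\Mc_F^\theta)^\Delta=\Cb[E_4,\Delta]$ for $d=2$ and $=\Cb[E_4^3,\Delta]$ for $d=3$.

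The main obstacle I anticipate is making the mass-versus-endomorphism comparison explicit enough to cut down to a genuinely short, checkable list of fields, and then completing that finite analysis — which for $d=3$ already involves rank-$24$ lattices and a careful reading of the cubic classification. A secondary subtlety is ensuring throughout that the $\OcF$-structures one manipulates are honestly $\mathfrak{d}_F^{-1}$-unimodular and totally positive definite, rather than merely $\OcF$-module structures, so that each $\theta_L$ really is modular on $\SL_2(\OcF)$; this is the content of Proposition \ref{prop:exist} and must be carried along through the different.
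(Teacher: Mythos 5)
First, note that the statement you are proving is an open conjecture: the paper itself only establishes it for $d=2,3$ (Theorem \ref{thm:main}) and offers numerical evidence for $d=4,5,6$, and your argument likewise only addresses $d=2,3$. Within that scope, your opening reduction is sound and partly parallels the paper: $\theta_L^\Delta$ is the theta series of the underlying even unimodular $\Zb$-lattice, so for $d=2$ one gets $E_4$ from an $\OcF$-rank $4$ lattice, and in both cases the weight-$12$ problem becomes a question about which Niemeier lattices occur as underlying lattices of $\Zb$-unimodular $\OcF$-lattices. The paper uses exactly this observation in the cubic case. Where you diverge is the mechanism for producing a second, independent weight-$12$ form, and this is where there is a genuine gap. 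Your contradiction rests on the claim that the number of self-adjoint $\OcF$-structures carried by the two Niemeier lattices with $R(N)=720$ is $D_F^{O(1)}$ with exponent \emph{strictly smaller} than the exponent governing the mass of the genus. This is asserted, not proved, and the naive bounds do not support it: the mass grows roughly like $\prod_j |\zeta_F(1-2j)|$, i.e.\ like $D_F^{39}$ for $d=2$ (rank $12$) and $D_F^{18}$ for $d=3$ (rank $8$), whereas the set of height-$\ll D_F^{1/2}$ integral self-adjoint $\omega$ in a $24\times 24$ endomorphism ring satisfying a fixed minimal polynomial lives on a variety of dimension $\geq 144$, giving an a priori count far exceeding $D_F^{39}$. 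More fundamentally, the number of unimodular $\OcF$-structures on a fixed $N$ is itself a local--global quantity of the same nature as the mass (it is governed by the same Fourier coefficients one is trying to control), so no contradiction can be extracted from a pure counting comparison without the analytic input you are trying to avoid. The fallback to ``finitely many remaining fields'' and the classifications of \cite{Scharlau94, Wang14} is also not available: those references cover small-rank lattices over real quadratic fields of small discriminant, not rank $12$ over general quadratic fields nor cubic fields, and your argument produces no explicit bound on the exceptional set.

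The paper's actual route avoids this entirely. Siegel--Weil (Prop.\ \ref{prop:Funilatt}) places $E_{F,k}$ in $\Mc^\theta_F$, and the point is to show $E_{F,k}^\Delta$ is linearly independent from \emph{whichever} theta restriction one has, rather than to locate a Niemeier lattice with $R(N)\neq 720$ in the genus. For $d=2$ this is done by the Kohnen--Zagier formula, which expresses $\langle E_{F,6}^\Delta,\Delta\rangle$ through a coefficient $c(D)$ of a weight-$13/2$ form and compares a Hecke bound $|c(D)|\ll D^{13/4}$ against $|\zeta_F(-5)|\gg D^{11/2}$. For $d=3$ it is done by the unfolding of Prop.\ \ref{prop:Pet} combined with the Delone--Faddeev correspondence and the Datskovsky--Wright factorization of $\eta_F$, yielding $|\langle E_{F,4}^\Delta,\Delta\rangle|< 0.067/D_F$, while $|\langle\theta_P,\Delta\rangle|>1.22\times 10^{-6}$ for \emph{every} Niemeier lattice $P$ because $N_2(P)-65520/691$ is never zero; a finite numerical check handles $D_F<70000$. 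If you want to rescue your strategy, note that the independence of $E_{F,4}^\Delta$ from $\theta_P$ already \emph{implies} (for large $D_F$) that the genus meets at least two root numbers, so the implication runs in the opposite direction from the one you need: the Petersson estimate is the input, not a consequence of the lattice count.
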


To prove Theorem \ref{thm:main}, we apply an instance of the Siegel-Weil formula to see that the Hecke Eisenstein series $E_{F, k}$ defined in \eqref{eq:EFk} is contained in $\Mc^\theta_F$ for all $k \in (4/d_2)\Nb$. 
Then we calculate the Petersson inner product between the diagonal restriction of $E_{F, k}$ and an elliptic cusp form. 
For $d = 2$, this inner product is related to Fourier coefficients of half-integral weight modular forms by a result of Kohnen-Zagier \cite{KZ84}. 
For $d \ge 3$, we give an expression for this inner product in terms of a sum over the double coset $\Gamma_{F, \infty} \backslash \Gamma_F /\Gamma_\Qb$ (see Prop.\ \ref{prop:Pet}). 
When $d = 3$, we related this double coset to orders in a cubic field $F$ (see section \ref{sec:3}).
Using these results, we show that when $d = 2, 3$, $\Mc_\Qb^{(4d/d_2)}$ can be generated by $E_{F, k}^\Delta$ and $\theta_L^\Delta$ for a $\Zb$-unimodular $\OcF$-lattice $L$.

The same approach can be used to check conjecture \ref{conj:d} numerically when $d \in \{4, 5, 6, 8, 10\}$.
We list some results for $d = 4, 5, 6$ and $F$ has small discriminants in the last section (see Theorem~\ref{thm:num}).


\textbf{Acknowledgement}:
We thank Jan Bruinier and Tonghai Yang for helpful discussion about Prop.\ \ref{prop:Funilatt}. 
The authors are supported by the LOEWE research unit USAG, and by the Deutsche Forschungsgemeinschaft (DFG) through the Collaborative Research Centre TRR 326 ``Geometry and Arithmetic of Uniformized Structures'', project number 444845124.

\section{Preliminary}
Let $F$ be a totally real field of degree $d$ with ring of integers $\OcF$ and different $\dfF$.
Denote $\Cl(F)$ the (wide) class group of $F$.
Let $(V, Q)$ be an $F$-quadratic space of dimension $n$.
We say that $V$ is \textit{totally positive} if $V \otimes_{\iota(F)} \Rb$ is totally positive for every real embedding $\iota: F \hookrightarrow \Rb$.
In that case, $\SO_V(\Rb)$ is compact and the double quotient $\SO_V(F)\backslash \SO_V(\hat F)/K$ is a finite set for any open compact subgroup $K \subset \SO_V(\hat F)$. 
Here $\Ab_F$ and $\hat F$ are the adele and finite adele of $F$.

A finitely generated $\OcF$-module $L \subset V$ is called a ($\OcF$-)lattice if $L \otimes_\OcF F = V$.
We denote $\hat L := L \otimes \hat \Zb \subset \hat V = V \otimes \hat \Qb$.
If $Q(L) \subset \dfF^{-1}$, we say that $L$ is \textit{$\Zb$-even integral}
and call the lattice
\begin{equation}
  \label{eq:L'}
  L' := \{y \in V: (y, L) \subset \dfF^{-1} \}
\end{equation}
its \textit{$\Zb$-dual}.
Viewed as a $\Zb$-lattice  with respect to $Q_\Qb(x):= \tr_{F/\Qb}Q(x)$, such $L$ is even integral with dual $L'$. 
\begin{defi}
  \label{def:Zuni}
   An $\OcF$-lattice $L$ is said to be \textit{$\Zb$-unimodular} if $L' = L$.
\end{defi}
As a convention, the trivial lattice in the trivial $F$-vector space is totally positive and $\Zb$-unimodular.
Consider the monoid
\begin{equation}
  \label{eq:UF+}
  \Uc^+_F := \{(L, Q): L \text{ is an even } \Zb\text{-unimodular } \OcF\text{-lattice and totally positive definite}\}
\end{equation}
with respect to $\oplus$, and denote $\Uc^{+, n}_F \subset \Uc^+_F$ the subset of lattices of rank $n$.
We first have the following result.
\begin{prop}
  \label{prop:exist}
  The set $\Uc^{+, n}_F$ is non-empty precisely when $(8/ d_2) \mid n$. \end{prop}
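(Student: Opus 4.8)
The plan is to prove the two directions of the divisibility criterion separately, combining local obstructions at the archimedean and dyadic places with a global construction.

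First I would establish the necessity of $(8/d_2)\mid n$. The key point is that a $\Zb$-unimodular $\OcF$-lattice $L$ of rank $n$, viewed as a $\Zb$-lattice under $Q_\Qb = \tr_{F/\Qb}\circ Q$, is even unimodular of rank $nd$. By the classical theory of even unimodular $\Zb$-lattices (the signature must vanish mod $8$, and here the form is positive definite so the rank itself must be divisible by $8$), we get $8 \mid nd$. When $d$ is odd this forces $8 \mid n$; when $d$ is even it only gives $8 \mid nd$, i.e. $4\mid n$, which matches $(8/d_2)\mid n = 4\mid n$. So the archimedean/parity obstruction alone yields necessity, and no finer local analysis is needed for this direction.

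Next, for sufficiency, I would construct an explicit member of $\Uc^{+,n}_F$ for each admissible $n$. It suffices to produce one lattice of the minimal rank $n_0 := 8/d_2$ and take orthogonal direct sums, using that $\Uc^+_F$ is a monoid under $\oplus$ (closed under $\oplus$ by \eqref{eq:thetaprod}-type additivity of the rank and preservation of total positivity and $\Zb$-unimodularity). For $d$ odd ($n_0 = 8$): take $V = E_8 \otimes_\Zb F$ with the scaled trace form, or more robustly, use that the $F$-quadratic space of rank $8$ and trivial discriminant/Hasse invariants that is totally positive exists and contains a $\Zb$-unimodular lattice — one can build it from the norm form of a suitable totally definite quaternion order or from $\dfF^{-1}$ together with the standard rank-$8$ even unimodular form over $\Zb$, checking that localizing the trace pairing recovers unimodularity at each finite place including those dividing $2$. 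For $d$ even ($n_0 = 4$): here I expect to use the quadratic space attached to a totally definite quaternion algebra $B/F$ ramified exactly at the archimedean places and an even number of finite places, whose reduced norm form on a maximal order $\Oc_B$ is $\Zb$-unimodular after the trace; the constraint that $d$ be even is exactly what makes the parity of the ramification set work out so that such a $B$ with the prescribed archimedean behavior exists.

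The main obstacle I anticipate is the local analysis at the dyadic places $\pf \mid 2$: $\Zb$-unimodularity of $L$ is a statement about the $\Zb$-dual with respect to $\tr_{F/\Qb}Q$, and unravelling this over $\OcF \otimes \Zb_2$ — which need not be a product of unramified rings — requires care with the different $\dfF$ and with even-ness of the quadratic form in residue characteristic $2$. I would handle this by working place-by-place: at $\pf \nmid 2$ every totally positive definite even lattice of the right rank and discriminant is automatically $\Zb_\pf$-unimodular, while at $\pf \mid 2$ I would either cite the structure theory of quadratic forms over dyadic local fields or, more concretely, verify directly that the norm form of $\Oc_{B,\pf}$ (resp. the trace-scaled $E_8$ over $\OcF_\pf$) has the required unimodularity by an explicit Gram matrix computation. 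Assembling the local data into a global lattice is then routine via the Hasse principle for the existence of the quadratic space together with a strong approximation / local-global argument for lattices in a fixed genus.
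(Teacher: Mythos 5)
Your necessity argument has a genuine gap when $4 \mid d$. From the observation that $(L, \tr_{F/\Qb}\circ Q)$ is an even unimodular positive definite $\Zb$-lattice you correctly deduce $8 \mid nd$, and this does give $8 \mid n$ for $d$ odd and $4 \mid n$ for $d \equiv 2 \pmod{4}$. But for $d \equiv 0 \pmod{4}$ the condition $8 \mid nd$ yields at most $2 \mid n$ (and no constraint at all once $8 \mid d$): for a quartic field the proposition asserts that $n = 2$ is impossible, yet $nd = 8$ is perfectly consistent with the existence of an even unimodular positive definite $\Zb$-lattice, namely $E_8$. The real obstruction there is that $E_8$ (or any rank-$8$ even unimodular lattice) need not carry a compatible rank-$2$ $\OcF$-module structure, and detecting this requires the finer local--global analysis that the paper outsources to Chang's Satz 1. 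So your claim that ``no finer local analysis is needed for this direction'' is false precisely in the cases $d = 4, 8$, which the paper goes on to use in its numerical section.

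On the sufficiency side your constructions are the right ones in spirit (the reduced norm form on a maximal order of a totally definite quaternion algebra unramified at all finite places for $d$ even, a twist of $E_8$ for $d$ odd), but you elide the step that makes the twist legitimate: $\Zb$-unimodularity is self-duality with respect to the $\dfF^{-1}$-valued pairing, so converting an $\OcF$-unimodular lattice $M$ into a $\Zb$-unimodular one amounts to rescaling by an ideal $\af$ and a totally positive $\delta$ with $\dfF = \af^{2}(\delta)$. This is possible only because the class of $\dfF$ in the class group is a square (a theorem going back to Hecke); since $\dfF$ need not be principal, ``the standard rank-$8$ even unimodular form on $\dfF^{-1}$'' does not literally make sense without it. The paper's own proof explicitly invokes this square-class fact as the translation step, and you would need to do the same.
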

\begin{proof}
Satz 1 in \cite{Chang70} implies that there exists definite, unimodular $\OcF$-lattices in the sense loc.\ cit. if and only if $(8/d_2)\mid n$.
Furthermore since $n$ is even, all of the $2^d$ possible definite signatures will appear in the set of definite, unimodular $\OcF$-lattices of rank $n$. 
One can then use the fact that the class $\dfF$ in the class group is a square to translate this result to the existence $\Zb$-unimodular lattices.
 (see the proof of Prop.\ 2.5 in \cite{Li21} for details).
\end{proof}
\begin{rmk}
  \label{rmk:hL}
  For $L \in \Uc^{+, n}_F$ and $h \in \SO_V(\hat\Qb)$ with $V  = L \otimes_\OcF F$, the lattice
  \begin{equation}
    \label{eq:hL}
h \cdot L :=   (h \cdot \hat L) \cap V \subset V  
\end{equation}
   is also in $\Uc^{+, n}_F$.
\end{rmk}

For each $L \in \Uc^{+, n}_F$, let $\theta_L(\tau)$ be the associated theta function defined in \eqref{eq:thetaL1}.
It is a Hilbert modular form of parallel weight $n/2$ for $\SL_2(\OcF)$. 
Now, the Siegel-Weil formula \cite{Siegel, Weil65} gives us the following result.
\begin{prop}
  \label{prop:Funilatt}
  Let $F$ be a totally real field of degree $d$.
Then
\begin{equation}
  \label{eq:SW}
  \int_{\SO_V(F)\backslash \SO_V(\Ab_F)/\SO_V(\Rb)}
  \theta_{h \cdot L}(\tau) dh = \kappa E_{F, n/2}(\tau),
\end{equation}
for some positive constant $\kappa$, where $E_{F, k}$ is the Hecke Eisenstein series of parallel weight $k$ defined by
\begin{equation}
  \label{eq:EFk}
  \begin{split}
      E_{F, k}(\tau) 
&:= 
1 + \zeta_F(k)^{-1} \sum_{\Ac = [\af] \in \Cl(F)} \Nm(\af)^k 
\sum_{(c, d) \in \af^2/\Oc_F^\times,~ c \neq 0} 
\prod_{j = 1}^{d} (c_j \tau_j + d_j)^{-k}
  \end{split}
\end{equation}
In particular, $E_{F, k} \in \Mc^\theta_F$ for all $k \in (4/d_2)\Nb$. 
\end{prop}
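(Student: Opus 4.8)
The plan is to recognize \eqref{eq:SW} as an instance of the Siegel--Weil formula and then to match the resulting Eisenstein series with the classically normalized Hecke Eisenstein series $E_{F,n/2}$. Write $V = L\otimes_{\OcF}F$; by hypothesis $V$ is a totally positive definite $F$-quadratic space of dimension $n$, hence anisotropic, so $\SO_V$ is an anisotropic reductive $F$-group and $\SO_V(F)\backslash\SO_V(\Ab_F)$ is compact. In particular the integral in \eqref{eq:SW} converges for any Haar measure $dh$, which we now fix. By Proposition \ref{prop:exist} the dimension satisfies $n\ge 8/d_2\ge 4$, so we are in the absolutely convergent range of the Siegel--Weil formula for the dual pair $(\SL_2,\SO_V)$ over $F$ and no regularization is needed. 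Finally, $\theta_{h\cdot L}(\tau)$ is the value $\theta(\tau,h;\varphi)$ of the adelic theta kernel attached to this dual pair and to the Schwartz function $\varphi=\varphi_\infty\otimes\mathbf{1}_{\widehat{L}}$, where $\varphi_\infty$ is the Gaussian at the archimedean places; since $L$ is $\Zb$-unimodular and $4\mid n$, the theta kernel is genuinely a function on $\SL_2(\Ab_F)$ and the associated Eisenstein series has trivial central character.

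By Weil's theorem \cite{Weil65} (equivalently, by Siegel's mass formula for $\OcF$-lattices \cite{Siegel}), integrating the theta kernel over $\SO_V(F)\backslash\SO_V(\Ab_F)$ yields a holomorphic Hilbert modular form of parallel weight $n/2$ on $\SL_2(\OcF)$, lying in the Eisenstein subspace, whose value is computed locally from $\varphi$: at the archimedean places the Gaussian gives the standard holomorphic weight-$n/2$ section, and at each finite place $v$ the function $\mathbf{1}_{\widehat{L}_v}$ of the unimodular local lattice gives the normalized spherical section. Consequently the resulting Eisenstein series is the full-level, Hecke-normalized one of parallel weight $n/2$, which is precisely $E_{F,n/2}$ up to a positive constant $\kappa$. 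To pin down $\kappa$, compare constant Fourier coefficients: for every $h$ the constant term of $\theta_{h\cdot L}$ is $\#\{\lambda\in h\cdot L:Q(\lambda)=0\}=1$ by total positivity, so the left-hand side of \eqref{eq:SW} has constant term $\mathrm{vol}(\SO_V(F)\backslash\SO_V(\Ab_F)/\SO_V(\Rb))$, whereas $E_{F,n/2}$ has constant term $1$ by \eqref{eq:EFk}; hence $\kappa=\mathrm{vol}(\SO_V(F)\backslash\SO_V(\Ab_F)/\SO_V(\Rb))>0$.

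For the final assertion, fix a compact open $K\subset\SO_V(\widehat{\Qb})$ fixing $\widehat{L}$; then $\SO_V(F)\backslash\SO_V(\Ab_F)/(\SO_V(\Rb)\times K)$ is a finite set with representatives $h_1,\dots,h_r$, and \eqref{eq:SW} becomes a finite sum $\kappa\,E_{F,n/2}=\sum_{i=1}^r c_i\,\theta_{h_i\cdot L}$ with each $c_i>0$ equal to the volume of the corresponding double coset. By Remark \ref{rmk:hL} each $h_i\cdot L$ again lies in $\Uc^{+,n}_F$, so $\theta_{h_i\cdot L}\in\Mc^\theta_F$ and $E_{F,n/2}$ is a positive real-linear combination of theta functions, hence lies in their $\Cb$-span. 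Since the $\theta_{h_i\cdot L}$ have integral Fourier coefficients while $E_{F,n/2}$ has rational ones (the relevant $\zeta_F$-values at nonpositive integers are rational by the Siegel--Klingen theorem), this linear relation is already defined over $\Qb$, so $E_{F,n/2}\in\Mc^\theta_F$. Writing $k=n/2$ and letting $n$ range over $(8/d_2)\Nb$ gives $E_{F,k}\in\Mc^\theta_F$ for every $k\in(4/d_2)\Nb$.

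The main obstacle is the middle step: identifying the adelic Siegel--Weil Eisenstein series with the classically normalized series $E_{F,n/2}$ of \eqref{eq:EFk}. Concretely, one must verify that $\Zb$-unimodularity of $L$ forces the unramified (spherical) section at every finite place, so that one genuinely obtains the full-level Eisenstein series with the Fourier expansion of \eqref{eq:EFk} rather than a twist or an oldform, and that the archimedean Gaussian is normalized so as to produce constant term $1$. By contrast, the convergence difficulties that usually complicate the Siegel--Weil formula do not arise here, since $\SO_V$ is anisotropic and $n\ge 4$ places us in the classical convergent range.
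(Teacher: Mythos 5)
Your proposal follows essentially the same route as the paper: apply the Siegel--Weil formula (convergent since $V$ is totally positive definite, hence anisotropic), identify the resulting Eisenstein series with $E_{F,n/2}$ using the self-duality of the local lattices, and observe that the orbit integral is a finite positive combination of theta functions $\theta_{h_i\cdot L}$ with $h_i\cdot L\in\Uc^{+,n}_F$. The one step you explicitly flag as ``to be verified'' --- that $\Zb$-unimodularity forces the spherical section and hence the correct Fourier expansion --- is precisely where the paper does the work, computing the local Whittaker functions $W_{t,\pf}(1,s,\Phi_{L,\pf})=\sum_{m=0}^{\ord_\pf(t\df_{F_\pf})}\Nm(\pf)^{s}$ for $t\in\df_{F_\pf}^{-1}$ and matching nonconstant Fourier coefficients with \eqref{eq:FEHE}; note the twist by the different here (the lattice is self-dual for the trace form, not the $F$-bilinear form), which is why the coefficients are $\sigma_{k-1}(t\df_F)$ and the identification is slightly more than ``spherical section implies full-level Hecke Eisenstein series.''
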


\begin{rmk}
  \label{rmk:FEHE}
The Hecke Eisenstein series have the following well-known Fourier expansion (see \cite{Siegel-rational, Zag76})
\begin{equation}
  \label{eq:FEHE}
E_{F,k}(\tau)=1 + \frac{2^d}{\zeta_F(1-k)}\sum_{t \in \df_F^{-1},~ t \gg 0}
\sigma_{k-1}(t\df_F) \prod_{j = 1}^d q_j^{t_j \tau_j}
\end{equation}
with $  \sigma_r(\af):=\sum_{{\mathfrak{b}|\mathfrak{a},~ \mathfrak{b}\subset\mathcal{O}_F}}{\Nm(\mathfrak{b})^r}$ for any integral ideal $\af$ and $r \in \Nb$.
\end{rmk}

\begin{proof}
By the Siegel-Weil formula, the left hand side of \eqref{eq:SW} equals to the Eisenstein series
$$
E_L(\tau) = v^{-n/4} \sum_{\gamma \in B(F) \backslash \SL_2(F)} \Phi_L(\gamma g_\tau, n/2 - 1),
$$
where $B \subset \SL_2$ is the standard Borel subgroup, and $\Phi_L$ is the Siegel-Weil section associated to the lattice $L$ (see e.g.\ \cite[section I.3]{Kudla08}).
For $t \in F^\times$, the $t$-th Fourier coefficient of $E_L$ is given by 
$$
\prod_{\pf < \infty} W_{t, \pf}(1, n/2-1, \Phi_{L, \pf})
$$
up to constant independent of $t$.
Here $W_{t, \pf}(g, s, \phi)$ is the local Whittaker function (see e.g.\ \cite{Yang05}). 
Since $L$ is $\Zb$-unimodular, the local lattice $L \otimes \Oc_{F, \pf}$ in $V \otimes F_\pf$ is self-dual for every finite place $\pf$.
Standard calculations (see e.g.\ \cite{KY10}) then gives us
$$
W_{t, \pf}(1, s, \Phi_{L, \pf})
= \sum_{m = 0}^{\ord_\pf(t \df_{F_\pf})} \Nm(\pf)^{s}
$$
when $t \in \df_{F_\pf}^{-1}$, and zero otherwise.
So up to a constant, the Eisenstein series $E_L$ and $E_{F, n/2}$ have the same non-constant term Fourier coefficients, hence agree.
Now the left hand side of \eqref{eq:SW} is just a sum of $\theta_{L_j}$ over certain $L_j \in \Uc^{+, n}_F$ by Remark \ref{rmk:hL}.
Combining this with Prop.\ \ref{prop:exist} finishes the proof.
\end{proof}

We can rewrite the Hecke-Eisenstein series $E_{F, k}$ as
$$
      E_{F, k}(\tau) 
:= 
1 + 
 \sum_{\substack{\Ac = [\af] \in \Cl(F)\\(c, d) \in \af^2/\Oc_F^\times\\c \neq 0\\ \OcF c + \OcF d = \af}} 
\lp \frac{ \Nm(\af)}{\Nm(c)} \rp^k 
\prod_{j = 1}^{d} (\tau_j + d_j/c_j)^{-k}
$$
For any $\beta \in F$, there is unique $\Ac = [\af]$ and $(c, d) \in \af^2/\Oc_F^\times$ with $c \neq 0$ such that $\af = \OcF c + \OcF d$ and  $\beta = d/c$. Therefore, we denote
\begin{equation}
  \label{eq:ebeta}
\ee_{\beta} := 
 \frac{ \Nm(c)}{\Nm(\af)} \in \Zb - \{0\} .
\end{equation}
It is easy to check this definition does not depend on the choice of the representative $\af$, and
\begin{equation}
  \label{eq:eprop}
  \ee_{\beta + a, k} =   \ee_{\beta, k}
\end{equation}
for all $a \in \Zb$.
Then we have
\begin{equation}
  \label{eq:HE2}
        E_{F, k}(\tau) = 1 + \sum_{\beta \in F} \ee_{\beta}^{-k} \prod_{j = 1}^{d} (\tau_j + \beta_j)^{-k}.
\end{equation}

%

\section{Petersson Inner Product Calculations}
\label{sec:Pet}
In this section, let $F/\Qb$ be totally real with degree $d \ge 3$.
We will give an expression for the Petersson inner product between the diagonal restriction of the Hecke Eisenstein series $E_{F,k}$ and an elliptic cusp form $f$ of weight $dk$.

For $\alpha \in M_{m, n}(F)$ and   $1 \le j \le d$, we write $\alpha_j \in M_{m, n}(\Rb)$ with $1 \le j \le d$ for the real embeddings of $\alpha$.
We identify  $\Pb^1(F) \cong B(F) \backslash \SL_2(F)$ via
\begin{equation}
  \label{eq:iota}
\beta \mapsto
\begin{cases}
 \smat{*}{*}{1}{\beta}& \beta \in F,\\  
 \smat{*}{*}{0}{1}& \beta = \infty.
\end{cases}
\end{equation}
Let $S_0 \cup\{\infty\} \subset \Pb^1(F)$ be a set of representatives of the double coset $B(F)\backslash \SL_2(F) /\SL_2(\Zb)$.
Then $S_0 \subset  F - \Qb$ 
and we can use \eqref{eq:HE2} to express the diagonal restriction of $E_{F, k}$ as
\begin{equation}
\label{eq:ED}
  E_{F, k}^\Delta(\tau) 
= E_{dk} + \sum_{\beta \in S_0} E_{F, k, \beta}(\tau),~
 E_{F, k, \beta}(\tau) := \sum_{\gamma \in \SL_2(\Zb)} 
\ee_{-\gamma^{-1}\cdot (-\beta_j)}^{-k}
\prod_{j = 1}^{d} 
(\tau - \gamma^{-1}\cdot (-\beta_j))^{-k}
\end{equation}
with $\tau \in \Hb$.
Note that $E_{d k}$ is just the elliptic Eisenstein series of weight $dk$.

Let $f(\tau) = \sum_{n \ge 1} c_n q^n \in S_{d k}$ be a cusp form. 
We are interested in estimating its inner product with $E^\Delta_{F, k}$. 
By the usual unfolding process, we obtain
\begin{align*}
  \langle E^\Delta_{F, k}, f\rangle
&= \sum_{\beta \in S_0} \int_{\Gamma_{\infty} \backslash \Hb}
E^\infty_{F, k, \beta}(\tau) \overline{f(\tau)} v^{d k} \frac{dudv}{v^2}\\
&= \sum_{\beta \in S_0} 
\int_0^\infty \sum_{n \ge 1}  \overline{c_n} a_{F, k, \beta}(n, v) e^{-2\pi n v} v^{d k - 1}\frac{dv}{v},
\end{align*}
where $\Gamma_\infty := B(\Qb) \cap \SL_2(\Zb)$ and 
\begin{equation}
  \label{eq:Einf}
  \begin{split}
  E^\infty_{F, k, \beta}(\tau) 
&:= \sum_{\gamma \in \Gamma_{ \infty}} 
\ee_{-\gamma^{-1}\cdot (-\beta)}^{-k}
\prod_{j = 1}^{d} (\tau - \gamma^{-1}\cdot (-\beta_j))^{-k}\\
&= 
2 \ee_{\beta}^{-k}
\prod_{j = 1}^{d} ( \tau + \beta_j + b)^{-k}
= \sum_{n \in \Zb} a_{F, k, \beta}(n, v) \ebf(nu).
\end{split}
\end{equation}
for $\beta = d/c \in S_0$.
Here we have $r_{-\gamma\cdot (-\beta)} = r_\beta$  for all $\gamma \in \Gamma_\infty$ by \eqref{eq:eprop}. 
It is easy to see that 
\begin{equation}
  \label{eq:an}
  \begin{split}
      a_{F, k, \beta}(n, v) 
      &= 
2 \ee_\beta^{-k}
      \int_{\Rb} \prod_{j = 1}^{d} ( u + iv + \beta_j)^{-k} \ebf(-nu) du\\
      &=  4\pi i
(-\ee_\beta)^{-k} 
      \sum_{z \in Z(\beta)} \mathrm{Res}_{x = z}
\lp \ebf(nx) \prod_{j = 1}^{d} ( x - (\beta_j + iv))^{-k}  \rp,
  \end{split}
\end{equation}
where $Z(\beta) := \{\beta_j + iv: 1 \le j \le d\} \subset \Hb$ since
\begin{equation}
  \label{eq:Pj}
 \sum_{z \in Z(\beta)}   \mathrm{Res}_{x = z} \lp \ebf(nx) \prod_{j = 1}^{d} ( x - z_j)^{-k} \rp
=   \frac{1}{2\pi i} \int_\Rb \ebf(nx)  \prod_{j = 1}^{d} ( x - z_j)^{-k} dx.
\end{equation}
Suppose $\beta_j$'s are all distinct.
Then
\begin{align*}
  \sum_{z \in Z(\beta)} \mathrm{Res}_{x = z}
  &\lp \ebf(nx) \prod_{j = 1}^{d} ( x - (\beta_j + iv))^{-k} \rp
  =
\frac{1}{\Gamma(k)}    \sum_{j = 1}^{d}
\lp \frac{d}{dx}\rp^{k-1}
    \lp 
    \frac{ \ebf(nx)}{\prod_{j' = 1,~ j' \neq j}^{d}( x - (\beta_{j'} + iv))^{k}} \rp\mid_{x = \beta_{j'} + iv}\\
  &=
\frac{\ebf(n iv)}{\Gamma(k)}    \sum_{j = 1}^{d}
    \sum_{\ell = 0}^{k-1} (2\pi i n )^{k-1-\ell}
    {\ebf(n\beta_{j})e^{-2\pi n v}}
    \binom{k-1}{\ell}
    \lp \frac{P_{d-1, k, \ell}}{Q_{d - 1, k+\ell}} \rp(\beta_{j} - \beta_1, \dots, \beta_{j} - \beta_{d}),
\end{align*}
where $P_{m, k, \ell}, Q_{m, r} \in \Qb[x_1, \dots, x_m]$ are symmetric polynomials of degrees $(m-1)\ell$ and $mr$ defined by
\begin{equation}
  \label{eq:PQ}
  \begin{split}
    P_{m, k, \ell}(x_1, \dots, x_m)
    &:=
    (x_1 \dots x_m)^{k+\ell}  (\partial_{x_1} + \dots + \partial_{x_m})^{\ell} (x_1 \dots x_m)^{-k},\\
    Q_{m, r}(x_1, \dots, x_m) &:= (x_1 \dots x_m)^{r}.
  \end{split}
\end{equation}
Note that
\begin{equation}
  \label{eq:PQ1}
  \frac{P_{m, k, \ell}}{Q_{m, k+\ell}}(x_1, \dots, x_m)
  =
(-1)^\ell {\ell!}  \sum_{r = (r_j) \in \Nb^m,~ \sum_{j} r_j = \ell}
\lp\!\! \binom{k}{r}\!\!\rp 
    \prod_{j = 1}^m x_j^{-k-r_j},
  \end{equation}
  where $\lp\!\! \binom{k}{r}\!\!\rp := \frac{k^{(r_1)}\dots k^{(r_m)}}{r_1!\dots r_m!}$
  for $r = (r_1, \dots, r_m) \in \Nb^m$  with $k^{(n)} := k(k+1)\dots (k+n-1)$.
  Substituting this into the unfolding gives us the following result.
  \begin{prop}
    \label{prop:Pet}
    Suppose $F$ is a totally real field of degree $d \ge 3$ and there is no intermediate field between $F$ and $\Qb$. For any $k \in 2\Nb$ and $f(\tau) = \sum_{n \ge 1} c(n)q^n \in S_{dk}$, we have
\begin{equation}
  \label{eq:inner0}
  \begin{split}
  &  \langle E^\Delta_{F, k}, f\rangle
  =
 \frac{ i \Gamma(d k - 1)}{(4\pi)^{d k - 2} \Gamma(k)}    
    \sum_{\ell = 0}^{k-1} (2\pi i)^{k-1-\ell}
    \sum_{\beta \in S_0}    \ee_{\beta}^{-k} \\ 
&\times
\sum_{j  = 1}^{d}      
\lp    \frac{P_{d-1, k, \ell}}{Q_{d - 1, k+\ell}}\rp
    (\beta_{j } - \beta_1, \dots, \beta_{j } - \beta_{j -1}, \beta_{j } - \beta_{j  + 1}, \dots, \beta_{j } - \beta_{d})
    \sum_{n \ge 1}
\frac{    \ebf(n\beta_{j })    \overline{c_n}}{    n^{(d -1)k + \ell }},
\end{split}
\end{equation}
where the polynomials $P_{m, k, \ell}$ and $Q_{m, r}$ are defined in \eqref{eq:PQ}. 
  \end{prop}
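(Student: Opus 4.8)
The plan is to carry out the standard unfolding of the Petersson inner product for the Eisenstein-type series $E^\Delta_{F,k}$ against a cusp form, and to organize the resulting contour integrals via residues. First I would note that $E^\Delta_{F,k}$ decomposes as in \eqref{eq:ED} into the elliptic Eisenstein series $E_{dk}$ (which is orthogonal to the cusp form $f$ and therefore contributes nothing) plus a sum over $\beta \in S_0$ of the $\Gamma_\infty$-summed pieces $E_{F,k,\beta}$. Since $S_0 \cup \{\infty\}$ represents the double coset $B(F)\backslash \SL_2(F)/\SL_2(\Zb)$, each term $E_{F,k,\beta}$ is a sum over $\Gamma_\infty \backslash \SL_2(\Zb)$ of a single summand, which is precisely the shape needed to apply Rankin--Selberg unfolding: the integral over $\Gamma_{\SL_2(\Zb)}\backslash \Hb$ against $\overline{f}\, v^{dk}\,\tfrac{du\,dv}{v^2}$ unfolds to an integral over $\Gamma_\infty\backslash\Hb$ of the ``degenerate'' term $E^\infty_{F,k,\beta}$ defined in \eqref{eq:Einf}.

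Next I would compute the Fourier expansion in $u$ of $E^\infty_{F,k,\beta}(\tau)$, i.e.\ the coefficients $a_{F,k,\beta}(n,v)$ in \eqref{eq:an}. This is an explicit Fourier transform of $\prod_{j=1}^d (u+iv+\beta_j)^{-k}$, which by closing the contour in the upper half plane (using $n\ge 1$) is evaluated by the residue theorem at the poles $x = \beta_j + iv$, giving \eqref{eq:Pj}. Under the hypothesis that there is no intermediate field between $F$ and $\Qb$ — which forces the Galois conjugates $\beta_1,\dots,\beta_d$ of any $\beta \in S_0 \subset F - \Qb$ to be pairwise distinct — each pole is of order exactly $k$, so each residue is a $(k-1)$-st derivative, and the Leibniz rule produces the sum over $0 \le \ell \le k-1$ together with the symmetric rational functions $P_{d-1,k,\ell}/Q_{d-1,k+\ell}$ evaluated at the differences $\beta_j - \beta_{j'}$, as recorded just before the proposition and in \eqref{eq:PQ}, \eqref{eq:PQ1}. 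The factor $\ebf(n\beta_j)e^{-2\pi nv}$ and the powers of $2\pi i n$ come out of this differentiation cleanly.

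Then I would substitute this expansion into the unfolded integral and integrate term by term in $v$ over $(0,\infty)$. After combining $\ebf(niv)$ from the residue with the $e^{-2\pi nv}$ factors and the weight factor $v^{dk-1}$, the $v$-integral is a Gamma integral: $\int_0^\infty e^{-4\pi nv} v^{dk-1}\,\tfrac{dv}{v} = \Gamma(dk-1)(4\pi n)^{-(dk-1)}$. Collecting the various powers of $n$ — the $(2\pi i n)^{k-1-\ell}$ from the residue, the $\prod_j x_j^{-k-r_j}$ contributing nothing in $n$, and the $(4\pi n)^{-(dk-1)}$ from the Gamma integral — leaves $n^{-(d-1)k-\ell}$ in the sum against $\overline{c_n}$, together with the stated constant $\tfrac{i\,\Gamma(dk-1)}{(4\pi)^{dk-2}\Gamma(k)}$ and the remaining $(2\pi i)^{k-1-\ell}$; one must also track the overall factor $2\ee_\beta^{-k}$ and the $4\pi i$ appearing in \eqref{eq:an}, which account for the leading $i$ and the reduction of the exponent of $4\pi$ from $dk-1$ to $dk-2$. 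Interchanging the sum over $n$ with the integral is justified by absolute convergence since $f$ is cuspidal and $dk \ge 6$.

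The main obstacle is bookkeeping rather than conceptual: one has to verify that the finitely many constants — powers of $2\pi i$, $4\pi$, the $2$ from the two $\Gamma_\infty$-translates, the $i$ from $4\pi i$, and the Gamma factors $\Gamma(dk-1)/\Gamma(k)$ — assemble exactly into the displayed formula \eqref{eq:inner0}, and that the combinatorial identity \eqref{eq:PQ1} for $P_{m,k,\ell}/Q_{m,k+\ell}$ is applied with the correct number of variables $m = d-1$ (one pole is removed when taking the residue at $x = \beta_j + iv$). A secondary point requiring care is the use of the no-intermediate-field hypothesis: it guarantees simple-transitivity of the Galois action on the embeddings so that all $\beta_j$ are distinct and every pole has order exactly $k$; without it one would have to handle higher-order confluences, and the clean closed form would fail. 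Convergence of the Fourier-transform contour shift and of the resulting Dirichlet-type series in $n$ (controlled by $dk-1 - (k-1) = (d-1)k > 1$ together with polynomial growth of $c_n$) should be noted but is routine.
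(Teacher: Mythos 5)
Your proposal is correct and follows essentially the same route as the paper: split off $E_{dk}$ (orthogonal to $f$), unfold over the double coset $B(F)\backslash \SL_2(F)/\SL_2(\Zb)$ to an integral of $E^\infty_{F,k,\beta}$ over $\Gamma_\infty\backslash\Hb$, compute the Fourier coefficients $a_{F,k,\beta}(n,v)$ by residues at the $d$ poles (distinct precisely because the no-intermediate-field hypothesis forces $\Qb(\beta)=F$ for $\beta\in S_0$), expand the order-$k$ residues by the Leibniz rule into the $\ell$-sum with $P_{d-1,k,\ell}/Q_{d-1,k+\ell}$, and finish with the Gamma integral $\int_0^\infty e^{-4\pi nv}v^{dk-1}\tfrac{dv}{v}$. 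Your accounting of the constants (the $2$ from the two $\Gamma_\infty$-translates combining with $2\pi i$ into the $4\pi i$ of \eqref{eq:an}, the drop from $(4\pi)^{dk-1}$ to $(4\pi)^{dk-2}$, and the exponent $(d-1)k+\ell$ of $n$) matches the paper's computation leading to \eqref{eq:inner0}.
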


  \begin{rmk}
    The condition that there is no intermediate field between $F$ and $\Qb$ implies that $\beta_i = \beta_j$ if and only if $i = j$ for all $\beta \in F - \Qb$.
A similar but more complicated formula for the inner product can be derived without this condition.
  \end{rmk}
  \begin{exmp}
    \label{exmp:d=3}
  Let $d = 3$ and $k = 2$. Then 
$$
\frac{P_{d-1, k, \ell}}{Q_{d - 1, k+\ell}}(x, y)
=
\begin{cases}
1/(xy)^2,  & \ell = 0,\\
-2(x+y)/(xy)^3,&\ell = 1.
\end{cases}
$$
Set $\ga_1 := \beta_2 - \beta_3, \ga_2 := \beta_3 - \beta_1, \ga_3 := \beta_1 - \beta_2$,
we have 
  \begin{align*}
 &         \sum_{\ell = 0}^{k-1} (2\pi i n)^{k-1-\ell}
\sum_{j  = 1}^{d}      
\frac{P_{d-1, k, \ell}}{Q_{d - 1, k+\ell}}(\beta_{j } - \beta_1, \dots, \beta_{j } - \beta_{d})
    \ebf(n\beta_{j })\\
&= 
\lp \frac{2\pi i n}{(\gamma_2\gamma_3)^2}  + \frac{2(\gamma_3 - \gamma_2)}{(\gamma_2\gamma_3)^3}\rp    \ebf(n\beta_{1})
+ \lp \frac{2\pi i n}{(\gamma_1\gamma_3)^2}  + \frac{2(\gamma_1 - \gamma_3)}{(\gamma_1\gamma_3)^3}\rp    \ebf(n\beta_{2})
+ \lp \frac{2\pi i n}{(\gamma_1\gamma_2)^2}  + \frac{2(\gamma_2 - \gamma_1)}{(\gamma_1\gamma_2)^3}\rp    \ebf(n\beta_{3}).
  \end{align*}
  For $d = 3$ and $k-1 \ge \ell \ge 0$, we can write explicitly
  \begin{align*}
    &
      \sum_{j  = 1}^{d}      
\frac{P_{d-1, k, \ell}}{Q_{d - 1, k+\ell}}(\beta_{j } - \beta_1, \dots, \beta_{j } - \beta_{d}) 
    \ebf(n\beta_{j })\\
&=       
      \frac{P_{2, k, \ell}}{Q_{2, k+\ell}}(\ga_3, -\ga_2)\ebf(n\beta_{1})+
      \frac{P_{2, k, \ell}}{Q_{2, k+\ell}}(-\ga_3, \ga_1)\ebf(n\beta_{2})+
      \frac{P_{2, k, \ell}}{Q_{2, k+\ell}}(-\ga_2, -\ga_1)\ebf(n\beta_{3})      .
  \end{align*}
  Using the inequalities $k^{(a)}k^{(b)} \le k^{(a+b)}$, $(x_1 + x_2 + x_3)^2 \le 3(x_1^2 + x_2^2 + x_3^2)$, 
  \begin{equation}
    \label{eq:ineq1}
\sum_{\sigma \in S_3} x_{\sigma(1)}^a x_{\sigma(2)}^b x_{\sigma(3)}^c \le \frac{a!b!c!}{(a+b+c)!}(x_1 + x_2 + x_3)^{a+b+c},~ x_i, a, b, c \ge 0    
  \end{equation}
and  Equation \eqref{eq:PQ1}, we obtain the bound
  \begin{align*}
    &
\left|      \sum_{j  = 1}^{d}      
\frac{P_{d-1, k, \ell}}{Q_{d - 1, k+\ell}}(\beta_{j } - \beta_1, \dots, \beta_{j } - \beta_{d}) 
      \ebf(n\beta_{j })\right|\\
& \le     
\left|      \frac{P_{2, k, \ell}}{Q_{2, k+\ell}}(\ga_3, -\ga_2) \right|+
\left|      \frac{P_{2, k, \ell}}{Q_{2, k+\ell}}(-\ga_3, \ga_1)\right|+
\left|      \frac{P_{2, k, \ell}}{Q_{2, k+\ell}}(-\ga_2, -\ga_1)\right|\\
    & \le \frac{\ell! }{|\ga_1\ga_2\ga_3|^{k+\ell}}
      \sum_{a + b = \ell} \frac{k^{(a)}k^{(b)}}{a! b!}
\lp      |\ga_1^b \ga_2^a \ga_3^{k+\ell}| +       |\ga_2^b \ga_3^a \ga_1^{k+\ell}| +       |\ga_3^b \ga_1^a \ga_2^{k+\ell}| \rp\\
    & \le \ell!  \frac{(|\ga_1| + |\ga_2| + |\ga_3|)^{k+2\ell}}{|\ga_1\ga_2\ga_3|^{k+\ell}}
      \frac{      (k+\ell)!}{(k + 2\ell)!}
      \frac{\ell + 1}{2} k^{(\ell)}
\le
      \frac{      \binom{k-1+\ell}{\ell}}{\binom{k+2\ell}{\ell}}
      (\ell + 1)!
\frac{3^{k/2 + \ell}}{2}      \frac{(\ga_1^2 + \ga_2^2 + \ga_3^2)^{k/2+\ell}}{|\ga_1\ga_2\ga_3|^{k+\ell}}.
  \end{align*}  
\end{exmp}

\section{Double Coset and Binary Cubic Forms}
\label{sec:3}
  When $d = 3$, we can identify the double coset $B(F)\backslash \SL_2(F) /\SL_2(\Zb) - \{\infty\}$ with orders in $\OcF$ in the following way.
  Let
  $$
  \Qc_F := \{f(X, Y) = A  X^3 + B  X^2 Y + C  XY^2 + D  Y^3 \in \Zb[X, Y]: f(\beta, 1) = 0 \text{ for some } \beta \in F \backslash \Qb\}
  $$
  be the set of integral binary cubic forms with a root in $F - \Qb$.
  A form is primitive if its coefficients have no common factor.
  There is a natural action of $\SL_2(\Zb)$ on $\Qc_F$ that preserves the discriminant
  \begin{equation}
    \label{eq:Df}
    \begin{split}
      \Delta(f)
      &:= A^6 ((\beta_1 - \beta_2)(\beta_1 - \beta_3)(\beta_2 - \beta_3))^2\\
      &= 18 ABCD + B^2C^2 - 4 A C^3 - 4 B^3 D - 27 A^2D^2,    
    \end{split}
  \end{equation}
 and the subset of primitive forms.
  The quantity
  \begin{equation}
    \label{eq:Pf}
  P(f) := B^2 - 3A C > 0    
  \end{equation}
is the leading coefficient of the Hessian of $f$, which is a positive definite quadratic form and a coinvariant of $f$. 
  For every $f \in \Qc_F$, Prop.\ 2 in \cite{Cremona99} gives us $f' \sim_{\SL_2(\Zb)} f$ satisfying
  \begin{equation}
    \label{eq:Pbd}
    P(f') \le \sqrt{\Delta(f')} = \sqrt{\Delta(f)}. 
  \end{equation}

  Given $\beta  \in F - \Qb$, 
%
  we can associate to it a primitive element $f_\beta \in \Qc_F$ defined by
  \begin{equation}
    \label{eq:fs}
    f_\beta(X, Y) := \ee_\beta^{} \prod_{j = 1}^3 (X - \beta_j Y) = A_\beta X^3 + B_\beta X^2 Y + C_\beta XY^2 + D_\beta Y^3 \in \Qc_F.
  \end{equation}
  Note that $f_\beta(\beta, 1) = 0$ and the right action of $\SL_2(\Zb)$ on $ B(F)\backslash \SL_2(F)$ corresponds to its natural action on $\Qc_F$.

  To any binary cubic form $f$ with non-zero discriminant and $f(\beta, 1) = 0$
  we can associate the free $\Zb$-module of rank 3
  \begin{equation}
    \label{eq:Of}
    \Oc_f := \Zb + \Zb A \beta + \Zb (A \beta^2 + B \beta + C) \subset \Qb(\beta),
  \end{equation}
  which is also a  commutative ring.
  A classical result of Delone and Faddeev tells us that this gives a bijection between $\GL_2(\Zb)$-classes of binary cubic forms with non-zero discriminants and isomorphism classes of commutative rings that are free $\Zb$-modules of rank 3  \cite{DF64}.
  If we restrict $\beta$ to be in a fixed field $F$, then $\Oc_f$ is an order in $\OcF$, and $\Oc_{f_1}, \Oc_{f_2} \subset \OcF$ are the same if and only if $f_1, f_2 \in \Qc_F$ are  $\GL_2(\Zb)$-equivalent (see e.g.\ \cite[Lemma 3.1]{Na98}).
  Furthermore, we have
  \begin{equation}
    \label{eq:disc}
    \Delta(f) = \Delta(\Oc_f) = D_F [\OcF:\Oc_f]^2
  \end{equation}
  with $\Delta(\cdot)$ the discriminant. 
  For $s = [\beta] \in \Pb^1(F) /\SL_2(\Zb) - \{\infty\}$, we then denote
  \begin{equation}
    \label{eq:Ocs}
  \Oc_s := \Oc_{f_\beta}, \Delta(s) := \Delta(\Oc_s).    
\end{equation}
The discussions above lead to the following result.
\begin{prop}
  \label{prop:DF}
  The map
  \begin{align*}
    \Pb^1(F) /\SL_2(\Zb) - \{\infty\}
    &\to \{\Oc: \Oc \subset \OcF \text{ is an order}\}/\cong\\
    s &\mapsto \Oc_s
  \end{align*}
  is well-defined and $(2 |\mathrm{Aut}(\OcF)|)$-to-1.
\end{prop}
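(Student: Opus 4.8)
The plan is to identify $\Pb^1(F)/\SL_2(\Zb)-\{\infty\}$ with $\SL_2(\Zb)$-orbits of primitive binary cubic forms in $\Qc_F$ carrying a marked root in $F$, push this through the Delone--Faddeev correspondence \cite{DF64} to reach orders, and then count the resulting multiplicity, which I expect to split as $[\GL_2(\Zb):\SL_2(\Zb)]=2$ times a contribution of $\Aut(\OcF)$.

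First I would dispose of well-definedness. Under \eqref{eq:iota} the $\SL_2(\Zb)$-orbit of $\infty$ is $\Pb^1(\Qb)$, since $\SL_2(\Zb)$ acts transitively there, so the source is $(F-\Qb)/\SL_2(\Zb)$, and a class $[\beta]$ is sent first to the primitive form $f_\beta\in\Qc_F$ of \eqref{eq:fs} and then to $\Oc_{f_\beta}$ of \eqref{eq:Of}. Since $\Oc_{f_\beta}$ is a finitely generated $\Zb$-subalgebra of $F$ with $\Oc_{f_\beta}\otimes_\Zb\Qb=F$ (using $\beta\notin\Qb$) it is an order, and it lies in $\OcF$ because its elements are algebraic integers. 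That the \emph{subring} $\Oc_{f_\beta}\subset\OcF$, and not merely its isomorphism type, depends only on $[\beta]$ is the $\SL_2(\Zb)$-compatibility recorded after \eqref{eq:fs}; the one thing to check is that applying $\gamma\in\SL_2(\Zb)$ simultaneously to $f_\beta$ and to its marked root leaves the lattice \eqref{eq:Of} unchanged, which is a direct computation on generators.

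Next, to compute the fibers, I would fix an order $\Oc\subset\OcF$ with index form $f_0$ (necessarily primitive if $\Oc$ is in the image, which consists exactly of the orders with primitive index form), so that $\Oc=\Oc_{f_0}$ for some root $\beta_0\in F-\Qb$ of $f_0$ and $\Delta(f_0)=D_F[\OcF:\Oc]^2$ by \eqref{eq:disc}. By the Delone--Faddeev bijection and \cite[Lemma~3.1]{Na98}, $\Oc_{f_\beta}$ is isomorphic to $\Oc$ exactly when $f_\beta\sim_{\GL_2(\Zb)}f_0$, and since every primitive $g\in\Qc_F$ equals $\pm f_\beta$ for each of its roots $\beta\in F-\Qb$, the map $[\beta]\mapsto f_\beta$ is onto the $\SL_2(\Zb)$-classes of primitive forms in $\Qc_F$. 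Writing $f_\beta=f_0\circ\gamma$ with $\gamma\in\GL_2(\Zb)$ forces $\gamma\cdot\beta$ to be a root of $f_0$ lying in $F$, hence $\beta=\gamma^{-1}\cdot\sigma(\beta_0)$ for some $\sigma\in\Aut(\OcF)$, as the roots of $f_0$ in $F$ form a single $\Aut(\OcF)$-orbit. Thus, modulo $\SL_2(\Zb)$, the fiber over $\Oc$ consists of the classes $[\sigma(\beta_0)]$ and $[\delta\cdot\sigma(\beta_0)]$ with $\sigma\in\Aut(\OcF)$ and $\delta\in\GL_2(\Zb)$ a fixed element of determinant $-1$. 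The factor $2$ comes from the fact that the $\GL_2(\Zb)$-orbit of $f_0$ is exactly two $\SL_2(\Zb)$-orbits: the $\GL_2(\Zb)$-stabilizer of $f_0$ is, modulo $\pm I$, the ring automorphism group of $\Oc$, a subgroup of $\Aut(\OcF)$ of order $1$ or $3$, and both $\pm I$ and any order-$3$ element of $\GL_2(\Zb)$ have determinant $1$, so this stabilizer lies in $\SL_2(\Zb)$. The remaining factor $|\Aut(\OcF)|$ should come from counting the classes $[\sigma(\beta_0)]$ exactly --- equivalently, from the action of $\Aut(\OcF)$ on the roots of $f_\beta$ in $F$ and on the conjugate subrings realizing the isomorphism class of $\Oc$ --- and is trivial when $F/\Qb$ is non-Galois.

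The hard part will be precisely this last multiplicity count. One has to determine $\Aut_{\GL_2(\Zb)}(f_\beta)$ exactly, via its identification with $\Aut(\Oc_{f_\beta})$ under Delone--Faddeev, and follow its action on the one or three roots of $f_\beta$ lying in $F$, so as to rule out an a priori collapse among the classes $[\sigma(\beta_0)]$ and $[\delta\sigma(\beta_0)]$ once one passes to $\SL_2(\Zb)$-orbits; the orders $\Oc$ and fields $F$ with nontrivial automorphisms are the delicate cases and are where this bookkeeping must be done carefully. Everything else --- well-definedness and the Delone--Faddeev reduction --- is routine, and the reduction-theoretic inequality \eqref{eq:Pbd} of \cite{Cremona99} is not used in this proposition; it enters only in the subsequent arithmetic applications.
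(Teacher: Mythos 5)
Your proposal follows essentially the route the paper intends: the paper gives no explicit proof beyond the preceding discussion (the Delone--Faddeev correspondence, the assignment $\beta\mapsto f_\beta$, and the two counting factors $[\GL_2(\Zb):\SL_2(\Zb)]=2$ and the $|\Aut(\OcF)|$ roots of $f_\beta$ lying in $F$), which is precisely the skeleton of your argument. The subtleties you flag --- non-surjectivity onto orders with imprimitive index form, and a possible collapse among the $2|\Aut(\OcF)|$ candidate classes coming from the stabilizer of $f_\beta$ --- are genuine but are not addressed in the paper either, and since only the upper bound on fiber sizes is used in Lemma \ref{lemma:estimate}, your proposal is at least as complete as the paper's own treatment.
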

\begin{rmk}
  \label{rmk:1}
  The quantity $|\mathrm{Aut}(\OcF)|$ is either 3 or 1 depending on $F/\Qb$ is Galois or not.
\end{rmk}

  Finally, the following Dirichlet series
    \begin{equation}
    \label{eq:etaF}
    \begin{split}
      \eta_F(s)
      &:= \sum_{\Oc \subset \OcF \text{ order} }[\OcF: \Oc]^{-s}
    =  \sum_{\Oc\subset \OcF \text{ order} }\frac{D_F^{s/2}}{\Delta(\Oc)^{s/2}}.
    \end{split}
  \end{equation}
can be factorized in the following way by a result of Datskovsky and Wright \cite{DW86} (see \cite[Lemma 3.2]{Na98}) 
  \begin{equation}
    \label{eq:DW}
    \eta_{F}(s) = \frac{\zeta_F(s)}{\zeta_F(2s)} \zeta(2s)\zeta(3s-1).
  \end{equation}

\section{Proof of Theorem \ref{thm:main}}
We are now ready to prove Theorem \ref{thm:main}. 
The cases of $d = 2, 3$ are proved separately.

\begin{proof}[Proof of Theorem \ref{thm:main} for $d = 2$]
  For $k = 2, 4$, the space $M_{2k}$ is 1-dimensional and spanned by the Eisenstein series $E_{2k}$. 
Since $\theta_L^\Delta$ is non-trivial for any $L \in \Uc^+_F$, the claim follows in these two base cases as $M^\theta_{F, k}$ is non-trivial by Prop.\ \ref{prop:exist} (see also \cite{Scharlau94} for an explicit construction).
More generally, we know that $\Mc_{\Qb}^{(4)} = \Qb[E_4, \Delta]$. Therefore, it suffices to show that $\Delta \in S_{12}$ is in $(M^\theta_{F, 6})^\Delta$.
As $M_{12}$ is 2-dimensional and
\begin{equation}
  \label{eq:E43}
E_{4}^3 = E_{12} + \frac{432000}{691} \Delta  \in  (M^\theta_{F, 6})^\Delta,  
\end{equation}
we just need to produce a form $f \in (M^\theta_{F, 6})^\Delta$ linearly independent from $E_4^3$. 
For this purpose, we apply Prop.\ \ref{prop:Funilatt} with $k = 6$ to get
$$
f(\tau):= (E_{F, 6}^\Delta)(\tau) = 1 + \frac{4}{\zeta_F(-5)} 
\sum_{m \ge 1} q^m
\sum_{\nu \in \dfF^{-1},~ \nu \gg 0,~ \tr(\nu) = m} \sigma_{5}((\nu)\dfF).
$$
By Theorem 6 in \cite{KZ84}, we know that 
\begin{equation}
  \label{eq:REis}
  f =  E_{12} - \frac{12}{691} \frac{c(D)}{\zeta_F(-5)} \Delta,
\end{equation}
where $c(D)$ is the $D$-th Fourier coefficient of the half-integral weight form
$$
g(\tau) = \sum_{D \in \Nb} c(D) q^D := \frac{1}{8\pi i}(2 E_4(4\tau) \theta'(\tau) - E_4'(4\tau) \theta(\tau))
$$ 
spanning the Kohnen plus space $S_{13/2}^+$. 
Now using the easy estimate $L(k, \chi_D) > 2 - \zeta(k)$ for $k \ge 2$ (see e.g.\ Equation (3) in \cite{ChoieKohnen13}) we know that $\zeta_F(1-k)  = D^{k-1/2} \frac{4\Gamma(k)^2}{(-4\pi)^k} \zeta_F(k) $ satisfies
$$
|\zeta_F(-5)| >
0.01\cdot 
D^{11/2}.
$$
On the other hand, the Hecke bound for $c(D)$ yields 
$$
|c(D)| \le  c \cdot D^{13/4},~ c := e^{2\pi } \max_{\tau \in \Hb} |g(\tau)|v^{13/4} < 10
$$ 
Comparing with \eqref{eq:E43}, it is clear that $f$ and $E_4^3$ are linearly independent for all fundamental discriminant $D > 0$. This finishes the proof of Theorem \ref{thm:main} for $d = 2$.
\end{proof}

Using the calculation in section \ref{sec:Pet} and the correspondence in section \ref{sec:3}, we can prove the following lemma.
\begin{lemma}
  \label{lemma:estimate}
  For $d = 3, k \ge 3$ and $f(\tau) = \sum_{n \ge 1} c_f(n) q^n \in S_{3 k}$, let $c_f > 0$ be a constant such that  
  $$|c_f(n)| \le c_f \cdot n^{3k/2}$$
  for all $n \ge 1$.
  Then we have the bound
  \begin{equation}
    \label{eq:estimate}
  |  \langle E^\Delta_{F, k}, f\rangle|
    \le C_k c_f D_F^{-k/4}
  \end{equation}
  for all cubic field $F$, with $C_k := 6c_k \frac{\zeta(k/2)^3}{\zeta(k)^2} \zeta(3k/2 - 1)$ and the constant $c_k$ given in \eqref{eq:ck}.
\end{lemma}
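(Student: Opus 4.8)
The plan is to bound $|\langle E^\Delta_{F,k}, f\rangle|$ directly via the formula in Proposition \ref{prop:Pet}, estimating each ingredient of the double sum over $\beta \in S_0$ and $j$. First I would plug the Hecke-type bound $|c_f(n)| \le c_f n^{3k/2}$ into the inner sum over $n$ in \eqref{eq:inner0}: since the exponent of $n$ in the denominator is $(d-1)k + \ell = 2k + \ell$, the series $\sum_{n \ge 1} |c_f(n)| n^{-2k-\ell}$ converges and is bounded by $c_f \zeta(k/2 - \ell)$ (using $k/2 - \ell \ge k/2 - (k-1) > 1$ when... — more carefully, $2k + \ell - 3k/2 = k/2 + \ell \ge k/2 + 0$, so the series is $\le c_f \zeta(k/2)$ for all $\ell$, which is where the $\zeta(k/2)$ factors will come from). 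Combined with the $\Gamma$-factor prefactor $\frac{i\Gamma(dk-1)}{(4\pi)^{dk-2}\Gamma(k)}$ and the $(2\pi i)^{k-1-\ell}$, these contribute a constant depending only on $k$; I would absorb all such purely $k$-dependent quantities into the constant $c_k$ defined in \eqref{eq:ck}.

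Next comes the heart of the estimate: controlling $\sum_{\beta \in S_0} |\ee_\beta|^{-k} \sum_{j=1}^d |(P_{d-1,k,\ell}/Q_{d-1,k+\ell})(\beta_j - \beta_1, \dots, \widehat{\beta_j - \beta_j}, \dots)|$. For $d = 3$ I would use the explicit bound derived in Example \ref{exmp:d=3}: with $\gamma_1 = \beta_2 - \beta_3$, $\gamma_2 = \beta_3 - \beta_1$, $\gamma_3 = \beta_1 - \beta_2$, the displayed chain of inequalities there gives
\[
\sum_{j=1}^3 \left| \frac{P_{2,k,\ell}}{Q_{2,k+\ell}}(\beta_j - \beta_1, \dots) \ebf(n\beta_j) \right| \le \frac{\binom{k-1+\ell}{\ell}}{\binom{k+2\ell}{\ell}}(\ell+1)! \, \frac{3^{k/2+\ell}}{2} \cdot \frac{(\gamma_1^2 + \gamma_2^2 + \gamma_3^2)^{k/2+\ell}}{|\gamma_1\gamma_2\gamma_3|^{k+\ell}}.
\]
Now I recognize the geometric quantities in terms of the binary cubic form $f_\beta$ from Section \ref{sec:3}: by \eqref{eq:Df} and \eqref{eq:fs}, $|\ee_\beta|^2 |\gamma_1\gamma_2\gamma_3|^2 = A_\beta^{-6}\cdot \Delta(f_\beta)\cdot A_\beta^6 \cdot \ee_\beta^{2}$... more precisely $\ee_\beta^2 (\gamma_1\gamma_2\gamma_3)^2 = \Delta(f_\beta) = \Delta(\Oc_{[\beta]}) = D_F [\OcF : \Oc_{[\beta]}]^2$ by \eqref{eq:disc}. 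So $|\ee_\beta|^{-k} |\gamma_1\gamma_2\gamma_3|^{-(k+\ell)}$ together with the numerator factor $(\gamma_1^2+\gamma_2^2+\gamma_3^2)^{k/2+\ell}$ needs to be reorganized so that the dependence on $\ell$ telescopes and the leftover is a power of $D_F$ times a convergent sum over orders. The key is that $P(f_\beta) = B_\beta^2 - 3 A_\beta C_\beta$ is, up to the coinvariant structure, comparable to $|\ee_\beta|(\gamma_1^2 + \gamma_2^2 + \gamma_3^2)$-type quantities, and Cremona's reduction \eqref{eq:Pbd} lets me choose the representative $\beta \in S_0$ with $P(f_\beta) \le \sqrt{\Delta(f_\beta)} = \sqrt{D_F}[\OcF:\Oc_{[\beta]}]$. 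Using that bound I expect the $\beta$-term to collapse to something like $[\OcF:\Oc_{[\beta]}]^{-k/2}$ times an $\ell$-dependent constant; summing over $\ell$ from $0$ to $k-1$ gives another $k$-dependent constant; and then summing over $\beta \in S_0$ — which by Proposition \ref{prop:DF} maps $(2|\Aut(\OcF)|)$-to-$1$ onto orders $\Oc \subset \OcF$ — produces $2|\Aut(\OcF)| \sum_{\Oc \subset \OcF} [\OcF:\Oc]^{-k/2} = 2|\Aut(\OcF)|\, \eta_F(k/2)$. Finally I apply the Datskovsky–Wright factorization \eqref{eq:DW}: $\eta_F(k/2) = \frac{\zeta_F(k/2)}{\zeta_F(k)}\zeta(k)\zeta(3k/2-1)$, and bound $\zeta_F(k/2)/\zeta_F(k) \le \zeta(k/2)^d / 1 = \zeta(k/2)^3$ crudely (since $\zeta_F(k) > 1$ and $\zeta_F(k/2) = \prod_\pf (1-\Nm\pf^{-k/2})^{-1} \le \zeta(k/2)^3$ by comparing Euler factors place-by-place over the at most $3$ primes above each rational prime), while $|\Aut(\OcF)| \le 3$. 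Collecting the overall $D_F^{-k/4}$ from $[\OcF:\Oc]^{-k/2}\sqrt{D_F}^{\,?}$ bookkeeping and the constant $C_k = 6 c_k \frac{\zeta(k/2)^3}{\zeta(k)^2}\zeta(3k/2-1)$ yields \eqref{eq:estimate}.

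The main obstacle I anticipate is the bookkeeping in the middle step: one must track the power of $D_F$ carefully through the identifications $\ee_\beta^2(\gamma_1\gamma_2\gamma_3)^2 = D_F[\OcF:\Oc]^2$ and the Hessian bound $P(f_\beta) \le \sqrt{D_F}[\OcF:\Oc]$, making sure that after inserting the numerator $(\gamma_1^2+\gamma_2^2+\gamma_3^2)^{k/2+\ell}$ the net exponent of $D_F$ comes out to exactly $-k/4$ and the net exponent of $[\OcF:\Oc]$ is exactly $-k/2$ (independent of $\ell$), so that the sum over orders is precisely $\eta_F(k/2)$ and no extra divergent factor survives. A secondary subtlety is verifying that the geometric quantity $\gamma_1^2+\gamma_2^2+\gamma_3^2$ is genuinely controlled by $P(f_\beta)^2/\ee_\beta^2$ or a similar coinvariant expression — this requires comparing $\sum_{i<j}(\beta_i-\beta_j)^2$ against $B_\beta^2 - 3A_\beta C_\beta$ using Newton's identities for the roots $\beta_j$ of $f_\beta/A_\beta$, which is elementary but must be done with the correct normalization by $\ee_\beta = A_\beta$.
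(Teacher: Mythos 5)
Your route is the paper's: the unfolding formula of Prop.~\ref{prop:Pet} combined with the estimate of Example~\ref{exmp:d=3}, then Cremona's reduction \eqref{eq:Pbd}, the Delone--Faddeev correspondence of Prop.~\ref{prop:DF}, and the Datskovsky--Wright factorization \eqref{eq:DW}. The skeleton is right, but the step you yourself flag as the obstacle --- the exponent bookkeeping that turns the $\beta$-sum into $\Delta(f_\beta)^{-k/4}$ uniformly in $\ell$ --- is exactly where the proposal is not yet a proof, and the identities you write down there are incorrect. With $\gamma_1,\gamma_2,\gamma_3$ the root differences, the correct relations are
\[
\gamma_1^2+\gamma_2^2+\gamma_3^2 \;=\; \frac{2\,P(f_\beta)}{\ee_\beta^{2}},
\qquad
\gamma_1^2\gamma_2^2\gamma_3^2 \;=\; \frac{\Delta(f_\beta)}{\ee_\beta^{4}},
\]
the second by the standard discriminant formula $\mathrm{disc}=A^{2n-2}\prod_{i<j}(r_i-r_j)^2$ with $n=3$ (note this is $\ee_\beta^{4}$, not the $\ee_\beta^{2}$ you assert, nor the $A^6$ displayed in \eqref{eq:Df}, which appears to be a typo). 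Plugging these in, the powers of $\ee_\beta$ cancel \emph{exactly}:
\[
\ee_\beta^{-k}\,\frac{(\gamma_1^2+\gamma_2^2+\gamma_3^2)^{k/2+\ell}}{(\gamma_1^2\gamma_2^2\gamma_3^2)^{(k+\ell)/2}}
\;=\;2^{k/2+\ell}\,\frac{P(f_\beta)^{k/2+\ell}}{\Delta(f_\beta)^{(k+\ell)/2}}
\;\le\;2^{k/2+\ell}\,\Delta(f_\beta)^{-k/4},
\]
the last step by $P\le\sqrt{\Delta}$ for the reduced representative; this is what makes the $\ell$-dependence drop out and produces $D_F^{-k/4}[\OcF:\Oc_s]^{-k/2}$, hence $\eta_F(k/2)$. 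With your stated identity $\ee_\beta^2(\gamma_1\gamma_2\gamma_3)^2=\Delta(f_\beta)$ and the vague claim that $P(f_\beta)$ is ``comparable to $|\ee_\beta|(\gamma_1^2+\gamma_2^2+\gamma_3^2)$'', the exponents of $\ee_\beta$ do not cancel and the computation does not close; since this cancellation is the entire content of the lemma, it has to be carried out, not deferred. Two smaller points: you correctly note that $k\ge3$ is what makes $\zeta(k/2+\ell)$ converge at $\ell=0$; but your crude bound $\zeta_F(k/2)/\zeta_F(k)\le\zeta(k/2)^3$ loses a factor of $\zeta(k)^3$ against the stated constant $C_k=6c_k\,\zeta(k/2)^3\zeta(k)^{-2}\zeta(3k/2-1)$. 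To recover it, compare Euler products of the ratio itself: $\zeta_F(s)/\zeta_F(2s)=\prod_\pf(1+\Nm\pf^{-s})\le\prod_p(1+p^{-s})^3=\zeta(s)^3/\zeta(2s)^3$ at $s=k/2$, combined with $2|\Aut(\OcF)|\le 6$.
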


\begin{proof}
  Let $a_k:=\tfrac{\Gamma(3k-1)}{\Gamma(k)}(4\pi)^{2-3k}$.
For $\beta \in S_0 \subset F$, recall that $f_\beta$ is the binary cubic form associated to it in \eqref{eq:fs}, which has coefficients $A_\beta, B_\beta, C_\beta, D_\beta$.
  Using \eqref{eq:inner0}, the estimate in Example \ref{exmp:d=3} and \eqref{eq:Pbd}, we obtain the bound
  \begin{align*}
    |    \langle E^\Delta_{F, k}, f\rangle|
    & \le
    a_k\sum_{\ell = 0}^{k-1}(2\pi)^{k-1-\ell} 
    \sum_{n \ge 1}
      \frac{    |c_f(n)|}{    n^{2k + \ell }}
      \sum_{\beta \in S_0}
      A_\beta^{-k}
      \left|\sum_{j' = 1}^{d}      
\frac{P_{d-1, k, \ell}}{Q_{d - 1, k+\ell}}(\beta_{j'} - \beta_1, \dots, \beta_{j'} - \beta_{d})
    \ebf(n\beta_{j'})\right|\\
    &\le
c_f    \cdot a_k\sum_{\ell = 0}^{k-1}(2\pi)^{k-1-\ell} \zeta(k/2 + \ell)
      \frac{      \binom{k-1+\ell}{\ell}}{\binom{k+2\ell}{\ell}}
      (\ell + 1)!
\frac{3^{k/2 + \ell}}{2}\\
&\times      \sum_{\beta \in S_0}
    A_\beta^{-k}
                                                   \frac{((\beta_{  1} - \beta_{  2})^2 + (\beta_{  2} - \beta_{  3})^2 + (\beta_{  3} - \beta_{  1})^2)^{k/2 + \ell}}{((\beta_{  1} - \beta_{  2})^2 (\beta_{  2} - \beta_{  3})^2 (\beta_{  3} - \beta_{  1})^2)^{(k + \ell)/2}}\\
    & \le
2^{-1}    c_f\cdot a_k\sum_{\ell = 0}^{k-1}(2\pi)^{k-1-\ell}\zeta(k/2+\ell)
     \frac{      \binom{k-1+\ell}{\ell}}{\binom{k+2\ell}{\ell}}
      (\ell + 1)!
6^{k/2 + \ell}
      \sum_{\beta \in S_0}
      \frac{P(f_\beta)^{k/2 + \ell}}{
\Delta(f_\beta)^{(k + \ell)/2}}\\    
    & \le
      c_f\cdot c_k\sum_{\beta \in S_0}
      \Delta(f_\beta)^{-k/4}
    \le c_f \cdot c_k \cdot 2 |\Aut(\Oc_F)|\cdot D_F^{-k/4}\eta_F\lp \tfrac{k}{2}\rp
  \end{align*}
  Here the constant $c_k$ is defined by
  \begin{equation}
    \label{eq:ck}
    c_k :=  \frac{\Gamma(3k-1)}{2 \Gamma(k)}(4\pi)^{2-3k}
    \sum_{\ell = 0}^{k-1}(2\pi)^{k-1-\ell}\zeta(k/2+\ell)
     \frac{      \binom{k-1+\ell}{\ell}}{\binom{k+2\ell}{\ell}}
      (\ell + 1)!
6^{k/2 + \ell}.
  \end{equation}
  For the last steps, we used Prop.\ \ref{prop:DF}.
  Combining this with \eqref{eq:DW} and applying $\zeta_F(s) \le \zeta(s)^3$ for $s > 1$, we have
  \begin{align*}
    |    \langle E^\Delta_{F, k}, f\rangle|
    &\le
      c_f c_k  2 |\Aut(\Oc_F)|     \frac{\zeta_F\bigl(\tfrac{k}{2}\bigr)}{\zeta_F(k)} \zeta(k)\zeta\bigl(\tfrac{3k}{2}-1\bigr)
      D_F^{-k/4}
\le 6 c_f c_k\frac{\zeta\bigl(\tfrac{k}{2}\bigr)^3}{\zeta(k)^2}\zeta\bigl(\tfrac{3k}{2}-1\bigr)
D_F^{-k/4}
  \end{align*}
for $k \ge 3$. This finishes the proof.
\end{proof}

\begin{rmk}
  \label{rmk:k=4}
  For $k = 4$, the bound above gives $C_4 < 5.79 $.
  We can obtain a better bound by estimating the second to the last line in Example \ref{exmp:d=3} case by case  for each $\ell = 0, 1, 2, 3$, instead of using \eqref{eq:ineq1}.
  The improved bound is
  $$
  |  \langle E^\Delta_{F, 4}, f\rangle|
  \le 0.067 c_f D_F^{-1}
  $$
  for all totally real cubic field $F$.
\end{rmk}
Now we are ready to prove Theorem \ref{thm:main} in the cubic case.

\begin{proof}[Proof of Theorem \ref{thm:main} for $d = 3$]
  Since~$\Mc_\Qb^{(12)}=\Cb[E_{12},\Delta]$, we only have to check that $(\Mc^\theta_{F})^\Delta \cap \Mc_\Qb^{(12)}$ is 2 dimensional. 
  For any $L \in \Uc^+_F$, the diagonal restriction $\theta^\Delta_L$ is the theta function for a unimodular lattice $P$ over $\Zb$.
  So we know that $\theta_P \in (\Mc^\theta_{F})^\Delta$ for some Niemeier lattice $P$.
  To see that it is linearly independent from $E_{F, 4}^\Delta = 1 + c(1) q + O(q^2)$, it suffices to show that $c(1)$ is not integral.
  We have checked this numerically for any cubic $F$ with $D_F < 70000$.

More generally, we have
$$
\theta_P = E_{12} + (N_{2}(P) - 65520/691) \Delta,
$$
with $N_{2}(P)$ is the number of norm 2 vectors in $P$.
From Table V in \cite{CS82}, we obtain a list of $N_2(P)$ and 
$$
|\langle \theta_P, \Delta \rangle| = |N_{2}(P) - 65520/691| \langle \Delta, \Delta \rangle > 1.22 \times 10^{-6}
$$
for any Niemeier lattice $P$.
On the other hand by taking $c_\Delta = 1$, the upper bound found in Lemma~\ref{lemma:estimate} and improved in Remark \ref{rmk:k=4} gives us
\[
|\langle E^\Delta_{F, 4}, \Delta\rangle|\;<\;\frac{0.067}{D_F}.
\]
So $E^\Delta_{F, 4}$ and $\theta_P$ are linearly independent for $D_F \ge 60000$.
This finishes the proof.
\end{proof}

\section{Numerical Evidence for Conjecture \ref{conj:d}}
In this section, we approach numerically Conjecture \ref{conj:d} in the case~$F$ is a totally real field of degree $d \in \{4, 5, 6\}$. 
For these choices of~$d$ the space~$\mathcal{M}_\Qb^{(4d/d_2)}$ can be in principle generated by the restriction of Eisenstein series and of (at most) one theta function~$\theta_{L}$ of rank~$8/d_2$.  Conjecture~\ref{conj:d} reduces then to the verification of the linear independence of~$\theta_{L}^\Delta$ and~$E_{F,4/d_2}^\Delta$ for~$d = 5, 6$, and of monomials in~$\theta_{L}^\Delta, E_{F,4d/d_2}^\Delta$ and~$E_{F,k}^\Delta$ in general for suitable weights~$k$. This approach gives data supporting Conjecture~\ref{conj:d} in the case~$d=4,5$, and in the case~$d=6$ except for two fields $F$.
Our result, for which evidence is given in this final section, is the following.
\begin{thm}
\label{thm:num}
Conjecture~\ref{conj:d} holds for
\begin{enumerate}
\item $d=4$ and~$D_F\le 10^5$;
\item $d=5$ and~$D_F\le 2\times10^6$;
\item $d=6$ and~$D_F\le 5\times10^6$ except for the fields of discriminant~$453789$ and~$1397493$.
\end{enumerate}
\end{thm}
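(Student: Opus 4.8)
The plan is to reduce the identity $(\Mc^\theta_F)^\Delta = \Mc^{(N)}_\Qb$, where $N := 4d/d_2$, to a finite rank computation and then to run it over the fields in the stated ranges. Only the inclusion $\supseteq$ is at issue, the other being automatic. First fix a finite set of homogeneous algebra generators of the finitely generated ring $\Mc^{(N)}_\Qb$, of weights $N = w_1 \le \dots \le w_r$; these are elementary to list: $\Mc^{(8)}_\Qb$ is generated in weights $8$, $16$, $24$; one has $\Mc^{(12)}_\Qb = \Cb[E_4^3, E_6^2]$, free on two generators in weight $12$; and $\Mc^{(20)}_\Qb$ is generated in weights $20$, $20$, $40$, $60$. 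By Prop.~\ref{prop:Funilatt}, $E_{F,k} \in \Mc^\theta_F$ for all $k \in (4/d_2)\Nb$, and by Prop.~\ref{prop:exist} there is a $\Zb$-unimodular $\OcF$-lattice $L$ of rank $8/d_2$; hence $(\Mc^\theta_F)^\Delta$, being a subalgebra, contains every polynomial in the forms $E_{F,k}^\Delta$ and $\theta_L^\Delta$. It therefore suffices to verify, for each generating weight $w \in \{w_1, \dots, w_r\}$, that the degree-$w$ monomials in $\{E_{F,k}^\Delta : k \in (4/d_2)\Nb\}$ together with $\theta_L^\Delta$ span all of $M_w$; if so, every algebra generator of $\Mc^{(N)}_\Qb$ lies in $(\Mc^\theta_F)^\Delta$, and equality follows. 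For $d = 6$ this collapses to the single assertion that $\theta_L^\Delta$ and $E_{F,2}^\Delta$ are linearly independent in the two-dimensional space $M_{12}$; for $d = 4$ no theta function enters, since $\theta_L^\Delta$ is forced to equal $E_4^2 = E_{F,2}^\Delta$ and the check concerns only the three Eisenstein series $E_{F,2}^\Delta, E_{F,4}^\Delta, E_{F,6}^\Delta$.

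The inputs are explicit truncated $q$-expansions. For $E_{F,k}^\Delta$ one uses \eqref{eq:FEHE}: on the diagonal the coefficient of $q^m$ is $\frac{2^d}{\zeta_F(1-k)}\sum_t \sigma_{k-1}(t\dfF)$, summed over totally positive $t \in \dfF^{-1}$ with $\tr_{F/\Qb}(t) = m$. This needs the special value $\zeta_F(1-k) \in \Qb$, computed exactly; the enumeration of the finitely many totally positive $t \in \dfF^{-1}$ of given trace, a lattice-point count in a bounded polytope; and the divisor sums $\sigma_{k-1}(t\dfF)$, obtained by factoring ideals. Only a few more than $\dim M_{w_r}$ coefficients are needed. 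For $\theta_L^\Delta$ one takes a minimal-rank $\Zb$-unimodular $\OcF$-lattice $L$ --- constructed as in the proof of Prop.~\ref{prop:exist}, or read off from the classifications in \cite{Scharlau94, Wang14} --- and computes $\theta_L^\Delta = \theta_P$, the theta series of its underlying even unimodular $\Zb$-lattice $P$ of rank $8d/d_2$, by counting short vectors. For $d = 6$ this $P$ is a Niemeier lattice, so $\theta_L^\Delta = E_{12} + (N_2(P) - 65520/691)\Delta$, and the weight-$12$ test becomes the statement that $N_2(P) - 65520/691$ differs from the $\Delta$-coefficient of $E_{F,2}^\Delta$, an explicit rational multiple of $1/\zeta_F(-1)$.

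Given these expansions, for each $F$ one assembles at each generating weight $w$ the matrix whose rows are the truncated Fourier expansions of the relevant degree-$w$ monomials, and checks that its rank equals $\dim M_w$. Every entry is rational, so this is an exact computation, and a successful run is a rigorous proof for that $F$. Enumerating all totally real fields of degree $d$ with $D_F$ in the stated range --- via standard tables or a Hunter-type search --- and running the check establishes Conjecture~\ref{conj:d} for them; the two sextic discriminants $453789$ and $1397493$ are exactly the ones at which the rank condition fails, so the method leaves the conjecture open there.

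I expect the main obstacle to be computational scale rather than any point of principle. On one hand, one must produce, for each field in range, a minimal-rank $\Zb$-unimodular $\OcF$-lattice (or, for $d = 6$, identify the relevant Niemeier lattice and its minimum-vector count). On the other, the trace-$m$ enumerations and divisor sums feeding the $E_{F,k}^\Delta$ grow with $D_F$ and with the required weight --- up to $60$ for $d = 5$ --- and must be carried out for many fields. Making the whole thing rigorous --- an exact value of $\zeta_F(1-k)$, a provably complete lattice-point enumeration, and exact linear algebra over $\Qb$ --- is routine but must be done carefully. The one genuine loose end, where more computation will not help, is to understand and ideally circumvent (perhaps by bringing in a second unimodular lattice) the two exceptional sextic fields.
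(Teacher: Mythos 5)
Your proposal is correct and takes essentially the same approach as the paper: both reduce Conjecture~\ref{conj:d} to exact linear-independence checks at the generating weights of $\Mc^{(4d/d_2)}_\Qb$, using truncated $q$-expansions of $E_{F,k}^\Delta$ computed from \eqref{eq:FEHE} (special values $\zeta_F(1-k)$, enumeration of totally positive $\nu\in\dfF^{-1}$ of fixed trace, divisor sums) together with $\theta_L^\Delta$, run over all fields in range, with the two sextic exceptions arising exactly where the check fails. The only difference is one of execution: rather than constructing $L$ and counting short vectors, the paper sidesteps the theta series for $d=5$ by noting that $\theta_L^\Delta$ has integral Fourier coefficients while the relevant coefficient $b$ of $E_{F,4}^\Delta$ is non-integral, and for $d=6$ compares $720+b$ against the full list of Niemeier values $N_2(P)$ rather than identifying the specific lattice $P$.
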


\subsection{A note on the computations}
For $l,k\in\Zb_{\ge0}$, let $\sigma_{k-1}$ be as in Remark \ref{rmk:FEHE} and define
\[
s_l^F(k)\;:=\;\sum_{\substack{\nu\in\mathfrak{d}_F^{-1}\\ \nu\gg 0 \\ \mathrm{tr}(\nu)=l}}\sigma_{k-1}\bigl((\nu)\mathfrak{d}_F\bigr)\,.
\]
Then the diagonal restriction of~$E_{F,k}$ has the following~$q$-expansion at~$\infty$ by \eqref{eq:FEHE} 
\begin{equation}
\label{eqn:expres}
E_{F,k}^\Delta(\tau)\;=\;1\;+\;\frac{2^d}{\zeta_K(1-k)}\sum_{l=0}^\infty{s^F_l(k)}\,.
\end{equation}
We computed the first few coefficients of the above expansion with PARI/GP \cite{PARI}.
As~\eqref{eqn:expres} shows, this reduces to the determination of the functions~$s^F_l(k)$ for small values of~$l$ (up to~$l=5$ in the case $d=5$) and different values of~$k$. The main difficulty is to find the totally positive~$\nu\in\mathfrak{d}_F^{-1}$ of fixed trace~$l$. 
Let~$(\nu_1,\dots,\nu_d)$ be an integral basis for~$\mathfrak{d}_F^{-1}$. Then any~$\nu\in\mathfrak{d}_F^{-1}$ is of the form~$\nu=v_1\nu_1+\cdots+v_d\nu_d$ for~$(v_1,\dots,v_d)\in\Zb^d$ and conversely every vector in~$\Zb^d$ gives an element~$\nu\in\mathfrak{d}_F^{-1}$. If~$Q(x_1,\dots,x_d)$ denotes the quadratic form~$x_1^2+\dots x_d^2$, we have, for a totally positive~$\nu\in\mathfrak{d}_F^{-1}$, that~$Q(\sigma_1(\nu),\dots,\sigma_d(\nu))<\tr(\nu)^2$. This implies that if~$A=(\sigma_i(\nu_j))_{i,j}$ denotes the matrix of the real embeddings of the basis of~$\mathfrak{d}_F^{-1}$, we can search the totally positive~$\nu\in\mathfrak{d}_F^{-1}$ of fixed trace~$l$ among of vectors~$v=(v_1,\dots,v_d)\in\Zb^d$ satisfying
\[
v^T(A^TA)v\;=\;Q(\nu)\;<\;l^2\,.
\]
This gives a finite (but large as~$l$ and~$D_F$ grow) set of vectors on which we can perform the final search. Once the suitable~$\nu\in\mathfrak{d}_F^{-1}$ have been determined, it is straightforward to compute~$\sigma_k\bigl((\nu)\mathfrak{d}_F\bigr)$ for every value of~$k$ by using the basic PARI functions.


\begin{rmk}
\label{rmk:810}
It is possible to investigate also the cases~$d=8,10$ with the method outlined at the beginning of this section.
For the case~$d=8$ we need to compute five coefficients of the~$q$-expansion~\eqref{eqn:expres}, while for~$d=10$ we need to compute six coefficients. This, together with the size of the discriminants of these fields ($D_F\ge 282300416$ for~$d=8$ and~$D_F\ge443952558373$ for~$d=10$), makes it hard to collect significant data in these cases. 
\end{rmk}

\subsection{Tables}
\subsubsection*{d=4} Let~$F$ be a totally real field with~$[F:\Qb]=4$. 
In this case, the proof of Conjecture~\ref{conj:d} reduces to the statement that~$\mathcal{M}_\Qb^{(8)}$ is spanned by restrictions of Hilbert Eisenstein series on~$\Gamma_F$. It is easy to see that~$\{E_4^2,\Delta E_4,\Delta^2\}$is a generating set for~$\mathcal{M}_\Qb^{(8)}$. By a dimension argument, $E_{F,2}^\Delta=E_4^2$. It follows that~$\Delta E_4$ and~$\Delta^2$ can be obtained by restriction of Eisenstein series on~$\Gamma_F$ respectively if the sets $\{E_{F,4}^\Delta, (E_{F,2}^\Delta)^2\}$, and~$\{(E_{F,2}^\Delta)^3,E_{F,2}^\Delta E_{F,4}^\Delta, E_{F,6}^\Delta\}$ are both linearly independent. 

In order to study this problem, we compute the restriction of~$E_{F,k}$ for~$k=4,6$. 
As bases for~$M_{16}$ and $M_{24}$, we choose~$\{E_4^4,E_4\Delta\}$ and $\{E_4^6,E_4^3\Delta,\Delta^2\}$ respectively. We have
\begin{equation}
\label{eqn:resd=4}
\begin{aligned}
E_{F,4}^\Delta&\;=\; E_4^4\;+\;b E_4\Delta\,,\\
E_{F,6}^\Delta&\;=\; E_4^6\;-\;c_1 E_4^3\Delta\;+\;c_2 \Delta^2\,,
\end{aligned}
\end{equation}
for some coefficients~$b,c_1,c_2\in\Qb$ that depend on $F$.
To prove Conjecture \ref{conj:d}, it suffices to check that $b$ and $c_2$ are both non-zero. We computed the coefficients~$b,c_1,c_2$ for the first~$30$ totally real quartic fields~$F$. The results are reported in Table~\ref{table:d=4}. For these fields it is enough to specify the discriminant~$D_F$ to uniquely identify the field~$F$ (check the number field database~\cite{lmfdb}). This remark applies also for the fields we consider in the cases~$d=5,6$.

It turns out that the numerical values of~$b,c_1$, and $c_2$ are very close to~$955,1439$, and~$-129930$ respectively. These numbers are related to the Eisenstein series of weight~$16$ and~$24$ since
\[
E_{16}\;=\;E_4^4+b(E_{16})E_4\Delta\,,\quad E_{24}\;=\;E_4^5+c_1(E_{24})E_4^2\Delta+c_2(E_{24})\Delta^2,
\]
with
\[
b(E_{16})=-\tfrac{3456000}{3617}\sim 955\,,\;c_1(E_{24})=\tfrac{340364160000}{236364091}\sim1439\,,\;c_2(E_{24})=-\tfrac{30710845440000}{236364091}\sim129930\,.
\]
In other words, it seems that the diagonal restriction of~$E_{F,4}$ and~$E_{F,6}$ are close to~$E_{16}$ and~$E_{24}$ respectively. 
In analogy with the proof of Theorem~\ref{thm:main} in the case~$d=3$, Conjecture~\ref{conj:d} holds for~$D_F\gg0$ if the Petersson products of~$E_{F,4}^\Delta$ and~$E_{F,6}^\Delta$ with all cusp forms of weight~$16$ and~$24$ respectively can be bounded by small quantities as~$D_F\to\infty$. 
If~$F$ ranges over the totally real quartic fields with no non-trivial subfields, the decay of the Petersson products as~$D_F\to\infty$ can be observed from the data.
We expect similar strategy for the proof of Theorem \ref{thm:main} when $d = 3$ to work in this case.
When~$F$ ranges instead over extensions of the form~$\Qb\subset K \subset F$, where~$K$ is a fixed real quadratic field, the data suggest that
\[
\langle E_{F,k}^\Delta,f \rangle\rightarrow \langle E_{K,2k}^\Delta,f \rangle\quad\text{as }\mathrm{disc}(F)\to\infty\,.
\] 
The proof of Conjecture~\ref{conj:d} may be obtained then in two steps: first proving that~$E_{F,k}$ restrict to the Hilbert Eisenstein series~$E_{K,2k}$ on~$\Gamma_K$ as~$F\to\infty$, and then using Theorem~\ref{thm:main} for the real quadratic field~$K$.

\begin{table}[h]
\caption{$d=4$}
\label{table:d=4}
$\begin{array}{lcccc}
\toprule
D_F & \multicolumn{2}{c}{E_{F,4}^\Delta} &\multicolumn{2}{c}{E_{F,6}^\Delta}\\[1ex]
& -b & |b-b(E_{16})| & |c_1-c_1(E_{24})| & |c_2-c_2(E_{24})|\\
\midrule
725&\frac{518400}{541}&2.7375349&0.00050313732&25.886498\\[1ex]
1125&\frac{1209600}{1261}&3.7507260&0.00054525118&81.739221\\[1ex]
1600&\frac{16588800}{17347}&0.80418080&0.00021600333&72.207992\\[1ex]
1957&\frac{3379968}{3541}&0.96439255&0.00038594892&17.453573\\[1ex]
2000&\frac{3628800}{3793}&1.2217550&0.00025214822&55.822134\\[1ex]
2048&\frac{83358720}{87439}&2.1522766&0.00086000436&17.157301\\[1ex]
2225&\frac{4406400}{4601}&2.2168733&0.00044417599&65.944997\\[1ex]
2304&\frac{6996480}{7337}&1.8993132&0.00078107824&34.635539\\[1ex]
2525&\frac{40953600}{42787}&1.6625629&0.00038679430&60.388956\\[1ex]
2624&\frac{31242240}{32681}&0.48766988&0.00016760431&11.280096\\[1ex]
2777&\frac{30326400}{31739}&0.0052682944&2.49791\times10^{-5}&3.1916173\\[1ex]
3600&\frac{3940800}{4117}&1.7138725&0.00032163274&63.391164\\[1ex]
3981&\frac{22598400}{23651}&0.0065088042&1.13683\times10^{-5}&16.924484\\[1ex]
4205&\frac{81112320}{84937}&0.51758364&7.99169\times10^{-5}&20.235531\\[1ex]
4225&\frac{31168800}{32567}&1.5789962&0.00036468807&64.303710\\[1ex]
4352&\frac{14613696}{15301}&0.40686765&0.00042977636&2.3880196\\[1ex]
4400&\frac{287193600}{300017}&1.7697819&0.00034047188&63.576584\\[1ex]
4525&\frac{315705600}{329717}&2.0167958&0.00041971303&62.764065\\[1ex]
4752&\frac{94772160}{99107}&0.77303737&0.00019694052&7.4011277\\[1ex]
4913&\frac{358572096}{375437}&0.40870317&4.15983\times10^{-5}&2.8025931\\[1ex]
5125&\frac{24364800}{25453}&1.7587165&0.000037508012&63.196773\\[1ex]
5225&\frac{262310400}{273971}&1.9505876&0.00039428701&63.490510\\[1ex]
5725&\frac{716947200}{748883}&1.8674479&0.00042362838&63.447454\\[1ex]
5744&\frac{727626240}{761737}&0.26820601&7.42018\times10^{-5}&5.6160966\\[1ex]
6125&\frac{454636800}{474913}&1.8174699&0.00042240976&63.162128\\[1ex]
6224&\frac{204809472}{214357}&0.028287048&2.32205\times10^{-5}&5.7256495\\[1ex]
6809&\frac{87570720}{91723}&0.75775312&0.00019944285&7.0978686\\[1ex]
7053&\frac{1504154880}{1573751}&0.28894424&0.00013645348&2.0848135\\[1ex]
7056&\frac{191034720}{200123}&0.90144417&0.00037862134&11.709300\\[1ex]
7168&\frac{670104576}{701855}&0.72584168&0.00033000104&3.6107465\\[1ex]
\bottomrule
\end{array}$
\end{table}

\subsubsection*{d=5} Let~$F$ be a totally real field of degree~$5$. The space~$\mathcal{M}_\Qb^{(20)}$ is generated by the set~$\{E_{20}, E_8\Delta, E_4\Delta^3,\Delta^5\}$. In order to get this space by restriction of Hilbert theta series (Conjecture~\ref{conj:d}), we only need to consider a Hilbert theta function~$\theta_L$ for $L \in \Uc^{+, 8}_F$
and the Eisenstein series~$E_{F,4},E_{F,8},$ and~$E_{F,12}$.
Fixing basis for~$M_{20},M_{40},$ and~$M_{60}$, we find the expressions
\begin{equation}
\begin{aligned}
E_{F,4}^\Delta&\;=\;E_4^5\;+\;b E_4^2\Delta\,,\\
E_{F,8}^\Delta&\;=\;E_4^{10}\;+\;c_1E_4^7\Delta\;+\;c_2E_4^4\Delta^2\;+\;c_3E_4\Delta^3\,,\\
E_{F,12}^\Delta&\;=\;E_4^{15}\;+\;d_1E_4^{12}\Delta\;+\;d_2E_4^9\Delta^2\;+\;d_3E_4^6\Delta^3\;+\;d_4E_4^3\Delta^4\;+\;d_5\Delta^5\,,
\end{aligned}
\end{equation}
for~$b,c_i,d_i\in\Qb$ that depends on~$F$. Since~$\theta_L^\Delta=1+\sum_{n\ge1}{a_nq^n}$ with~$a_n\in\Zb$, in order to prove linear independence of~$\theta_L^\Delta$ and~$E_{F,4}^\Delta$, it suffices to show that~$b\not\in\Zb$. If this holds true, we only need that~$c_3\neq0$ and~$d_5\neq 0$ to prove Conjecture~\ref{conj:d}. The results of the computation of~$b,c_3,$ and~$d_5$, for the first~30 totally real quintic fields~$F$ (ordered by discriminant) can be found in Table~\eqref{table:d=5}.
Similarly to the case~$d=4$, the numerical values of~$b,c_i,d_i$ are close to the coefficients appearing in the expression of the Eisenstein series~$E_{20},E_{40}$, and~$E_{60}$ with respect to the bases specified above:
\[
E_{20}=E_4^5+b(E_{20})E_4^2\Delta\,,\;E_{40}=E_4^{10}+\sum_{i=1}^3{c_i(E_{40})E_4^{10-3i}\Delta^i}\,,\;E_{60}=E_4^{15}+\sum_{i=1}^5{d_i(E_{60})E_4^{15-3i}\Delta^i}\,,
\]
the relevant values being
\[
\begin{aligned}
b(E_{20})&=\tfrac{209520000}{174611}\sim 1199\,,\quad c_3(E_{40})=\tfrac{27014542428753690624000000000}{261082718496449122051}\sim103471200\\ 
d_5(E_{60})&=\tfrac{1423152253904739393602157818174020937318400000000000000}{1215233140483755572040304994079820246041491}\sim1171094011917\,.
\end{aligned}
\]
In Table~\ref{table:d=5} we do not write the numerical values of~$c_3,d_5$, but of their difference with the coefficients~$c_3(E_{40})$ and~$d_5(E_{60})$ respectively.
Analogously to the case~$d=4$, it seems that the diagonal restriction of~$E_{F,4},E_{F,8}$, and~$E_{F,12}$ are close to the Eisenstein series~$E_{20},E_{40}$, and~$E_{60}$ respectively. 
In particular, since~$\langle E_{20},E_4^2\Delta\rangle=0$, this implies that the Petersson product
\[
\langle E_{F,4}^\Delta,E_4^2\Delta\rangle\;=\bigl|b-b(E_{20})\bigr|\langle E_4\Delta^2,E_4^2\Delta\rangle\
\]
is small for any field~$F$ and may decay as~$D_F\to\infty$. Similar considerations apply to the cases~$E_{F,8}^\Delta$ and~$E_{F,12}^\Delta$. 
\begin{table}
\caption{$d=5$}
\label{table:d=5}
$\begin{array}{ccccc}
\toprule
D_F & \multicolumn{2}{c}{E_{F,4}^\Delta} & E_{F,8}^\Delta & E_{F,12}^\Delta \\
 & -b & |b-b(E_{20})| & |c_3-c_3(E_{40})| & |d_5-d_5(E_{60})| \\
\midrule
14641&\frac{1017360000}{847811}&0.060027104&20.049846&602.44929\\[1ex]
24217&\frac{539084160}{449263}&0.0056153314&3.0959986&626.69793\\[1ex]
36497&\frac{228998016}{190847}&0.020731861&3.7691249&625.79357\\[1ex]
38569&\frac{1372671360}{1144027}&0.065169297&7.1399961&53.593811\\[1ex]
65657&\frac{17909631360}{14926259}&0.050318395&1.4956754&21.447253\\[1ex]
70601&\frac{22786945920}{18989939}&0.023943997&6.3509437&57.580627\\[1ex]
81509&\frac{1255163040}{1046047}&0.013653680&0.14871660&33.681371\\[1ex]
81589&\frac{157427145}{131198}&0.0040921029&3.7633773&5.5844793\\[1ex]
89417&\frac{3299933520}{2750093}&0.010842438&0.88686411&7.1415794\\[1ex]
101833&\frac{27422375040}{22853437}&0.00095029875&2.8922290&147.56474\\[1ex]
106069&\frac{8416776960}{7014301}&0.020817098&1.2397693&6.2320866\\[1ex]
117688&\frac{72647616960}{60544963}&0.029096490&0.024328539&11.486101\\[1ex]
122821&\frac{2646596160}{2205599}&0.019992669&3.3965204&46.863235\\[1ex]
124817&\frac{169474446720}{141236923}&0.0057786083&1.2672930&6.7827247\\[1ex]
126032&\frac{186909793920}{155769041}&0.0082151643&0.36230593&11.688370\\[1ex]
135076&\frac{39368816640}{32809823}&0.014954319&0.35027520&21.034030\\[1ex]
138136&\frac{42439256640}{35368523}&0.0084035031&0.15151378&21.202474\\[1ex]
138917&\frac{30923687520}{25771127}&0.010994082&3.3294478&176.27092\\[1ex]
144209&\frac{35105335200}{29256611}&0.013203831&1.2246131&5.0019384\\[1ex]
147109&\frac{79422612480}{66189911}&0.0041847312&1.4237043&8.3262747\\[1ex]
149169&\frac{316249522560}{263551583}&0.028634117&1.8194260&15.033097\\[1ex]
153424&\frac{24509153664}{20425187}&0.023174277&3.4016473&25.402792\\[1ex]
157457&\frac{76544072064}{63790577}&0.0031668548&1.0412335&7.2747406\\[1ex]
160801&\frac{411236196480}{342716341}&0.0072814568&0.29819082&10.519558\\[1ex]
161121&\frac{6653973120}{5545309}&0.0038819428&0.12885785&18.108623\\[1ex]
170701&\frac{125695281600}{104754347}&0.019242412&2.2046583&104.80927\\[1ex]
173513&\frac{530059904640}{441734773}&0.026193754&0.10356861&42.247614\\[1ex]
176281&\frac{187387136640}{156166489}&0.0054077065&1.9881737&26.798380\\[1ex]
176684&\frac{60248727936}{50210921}&0.011580917&1.0387400&12.861239\\[1ex]
179024&\frac{638510843520}{532132229}&0.014289468&0.34114599&1.2188617\\[1ex]
\bottomrule
\end{array}$
\end{table}

\subsubsection*{d=6} We have that~$\mathcal{M}_\Qb^{12}=\Cb[E_4^3,\Delta]$. We have only to check that 
\[
E_{F,2}^\Delta\;=\;E_4^3\;+\;b\cdot\Delta
\]
is not the restriction of a Hilbert theta function~$\theta_L$. We know this is the case if~$b$ is not an integer, as explained in the proof of Theorem~\ref{thm:main} in the case~$d=3$. However, looking at the values of~$b$ computed for the first~$30$ totally real sextic fields~$F$ in Table~\eqref{table:d=6}, this is not always the case. Since~$\theta_L^\Delta=1+N_2(L)q+\cdots$, we have to compare, for integral values of~$b$, the number~$720+b$ with the possible values of~$N_2(L)$ listed in table V of~\cite{CS82} to check whether they differ or not. This happens in all cases but two: the field of discriminant~$453789$ has~$720+b=0=N_2(\Lambda_{24})$, the field of discriminant~$1397493$ has~$720+b=72=N_2(A^{12}_2)$. For these fields our argument can not confirm the validity of Conjecture~\ref{conj:d}. We checked fields up to $D_F=5\times10^6$ (144 fields) and found no other such instances.

As in the cases~$d=4,5$, in table~\eqref{table:d=6} we also compare the value of~$b$ with~$b(E_{12})=-\tfrac{432000}{691}$ (see~\eqref{eq:E43}).

\begin{table}[h]
\caption{$d=6$}
\label{table:d=6}
\[
\begin{array}{ccc}
\toprule
D_F & \multicolumn{2}{c}{E_{F,2}^\Delta}\\
 & -b & |b-b(E_{12})|\\
\midrule
300125&\frac{21600}{37}&41.397113\\[1ex]
371293&\frac{11808}{19}&3.7072130\\[1ex]
434581&\frac{8352}{13}&17.280641\\[1ex]
453789&720&94.819103\\[1ex]
485125&\frac{7200}{11}&29.364557\\[1ex]
592661&672&46.819103\\[1ex]
703493&\frac{2048}{3}&57.485769\\[1ex]
722000&\frac{4800}{7}&60.533388\\[1ex]
810448&\frac{3456}{5}&66.019103\\[1ex]
820125&\frac{43200}{73}&33.400075\\[1ex]
905177&\frac{3348}{5}&44.419103\\[1ex]
966125&675&49.819103\\[1ex]
980125&675&49.819103\\[1ex]
1075648&\frac{8352}{13}&17.280641\\[1ex]
1081856&\frac{3072}{5}&10.780897\\[1ex]
\bottomrule
\end{array}\quad
\begin{array}{ccc}
\toprule
D_F & \multicolumn{2}{c}{E_{F,2}^\Delta}\\
 & -b& |b-b(E_{12})|\\
\midrule
1134389&684&58.819103\\[1ex]
1202933&608&17.180897\\[1ex]
1229312&\frac{27264}{43}&8.8656144\\[1ex]
1241125&\frac{28800}{47}&12.414940\\[1ex]
1259712&\frac{17280}{31}&67.761542\\[1ex]
1279733&\frac{11736}{17}&65.172044\\[1ex]
1292517&\frac{16416}{29}&59.111932\\[1ex]
1312625&\frac{9000}{13}&67.126795\\[1ex]
1387029&696&70.819103\\[1ex]
1397493&648&22.819103\\[1ex]
1416125&\frac{12000}{19}&6.3980501\\[1ex]
1528713&\frac{12096}{19}&11.450682\\[1ex]
1541581&\frac{8352}{13}&17.280641\\[1ex]
1683101&\frac{65088}{103}&6.7414328\\[1ex]
1767625&\frac{25200}{41}&10.546751\\[1ex]
\bottomrule
\end{array}
\]
\end{table}

\bibliography{span}{}
\bibliographystyle{alpha}
\end{document}